\theoremstyle{plain}
\newtheorem{theorem}{Theorem}[section]
\newtheorem{proposition}{Proposition}[section]
\theoremstyle{definition}
\newtheorem{definition}{Definition}[section]
\newtheorem{remark}{Remark}[section]
\newtheorem*{notation*}{Notation}
\numberwithin{equation}{section}
\begin{document}
\title{Bicubic partially blended rational quartic surface}
\author[S.K. Katiyar]{S. K. Katiyar}
\address{Department of Mathematics, SRM Institute of Science and Technology, Chennai 603203, India}
\maketitle
\begin{abstract}
This paper investigates some univariate and bivariate constrained interpolation
problems using rational quartic fractal interpolation functions, which has been submitted long back in a reputed journal and revised as per the journal requirement. This research is extension of the work \cite{KCFractal2}.
\end{abstract}
Keywords: Fractals. Iterated function system. Fractal interpolation functions. Positivity. Monotonicity. Convexity. Blending functions. Bicubic partially blended fractal
surface · Convergence · Constrained interpolation.
\section{Introduction and Motivation}\label{ICAFWsec1}
\par 
Omnipresence of fractal functions is claimed by the fractal researchers (see, for instance, \cite{CK6,CNVS,CK7,NS3,N1,NVCSS})  in various contexts, inside the scope of pure mathematics and the real world applications. The problem of searching a sufficiently smooth function that preserves the
certain geometric shape properties of data is generally referred to as shape preserving interpolation, which is important in computer graphics and scientific visualization. The shape properties are mathematically expressed in terms of conditions such as positivity, monotonicity and convexity. Thus researchers keep trying to find best possible function that can interpolate the data with shape preserving property. As a submissive contribution to this goal, Chand and coworkers have initiated the study on shape preserving fractal interpolation and approximation using various families of polynomial and rational IFSs (see, for instance, \cite{AKBCSK1,CKIITR,KC,SKKKM,SKKthesis,KCGS,KCSJha,KCSole}). These shape preserving fractal interpolation schemes possess the novelty that the interpolants inherit the shape property in question and at the same time the suitable derivatives of these interpolants own irregularity in  finite or dense subsets of the interpolation interval. This attribute of shape preserving FIFs finds potential applications in various nonlinear phenomena.
\section{Preliminaries and Assumptions}\label{ICAFWsec1}
\noindent In this section, we present preliminaries, assumptions, and propositions required for proving the
main results. The details of the material presented here can be found in \cite{B1,B,PRM} for a complete overview.
\subsection{IFS for fractal functions}\label{ICAFWsec1a}
For $r\in \mathbb{N}$, let $\mathbb{N}_r$ denote the subset $\{1,2,\dots, r\}$ of $\mathbb{N}$.
Let a set of data points $\mathcal{D}=\{(x_i, y_i) \in \mathbb{R}^2: i \in \mathbb{N}_N\}$ satisfying $ x_1<x_2<\dots<x_N$, $N>2$, be given. Set $I = [x_1,x_N]$, $I_i = [x_i,x_{i+1}]$ for $i \in \mathbb{N}_{N-1}$. Suppose $L_n: I \rightarrow I_n$, $n \in \mathbb{N}_{N-1}$ be contraction homeomorphisms such that
\begin{equation}\label{ICAFWeq1a}
L_n(x_1)=x_n,\ L_n(x_N)=x_{n+1}.
\end{equation}
For instance, if $L_n(x) = a_n x+ b_n$, then the prescriptions in
(\ref{ICAFWeq1}) yield
\begin{equation}\label{ICAFWeq2a}
a_n=\frac{x_{n+1}-x_{n}}{x_N-x_1},~~b_n=\frac{x_N
x_n-x_1x_{n+1}}{x_N-x_1},~~ n \in \mathbb{N}_{N-1}.
\end{equation}
Let $0<r_n<1, n\in \mathbb{N}_{N-1}$, and $X:=I \times \mathbb{R}$. Let $N-1$ continuous mappings
 $F_n: X \to \mathbb{R}$ be given satisfying:
 \begin{equation}\label{ICAFWeq3a}
F_n(x_1,y_1)=y_n,\ \ F_n(x_N,y_N)=y_{n+1},~~ \vert
F_n(x,y)-F_n(x,y^*)\vert \leq r_n \vert
 y-y^*\vert,
\end{equation}
where $(x,y), (x,y^*)\in X$. Define functions $w_n: X \to I_n \times \mathbb{R},\ w_n(x,y)=\big(L_n(x),F_n(x,y)\big)$
$\forall~ i \in \mathbb{N}_{N-1} $. It is known \cite{B1} that there exists a metric on $\mathbb{R}^2$, equivalent to the Euclidean metric, with respect to which $w_n, n\in \mathbb{N}_{N-1}$, are contractions. The collection $\mathcal{I}=\{X; w_n:n \in \mathbb{N}_{N-1}\}$ is called an Iterated Function System (IFS). \noindent Associated with the IFS $\mathcal{I}$, there is a set
valued Hutchinson map $W:H(X) \rightarrow H(X)$ defined by $W(B)=\underset{n=1}  {\overset{N-1} \cup}w_n(B)$ for  $B\in H(X)$ , where $H(X)$ is the set of all nonempty compact subsets of $X$ endowed with the Hausdorff metric $h$. The Hausdorff metric h completes $H(X)$. Further, $W$ is a contraction map on the complete metric space $(H(X), h)$. By the Banach Fixed Point Theorem, there exists a unique set $G\in H(X)$ such that $W(G) = G$. This set $G$ is called the
attractor or deterministic fractal corresponding to the IFS $\mathcal{I}$. For any choices of $L_n$ and $F_n$
satisfying the conditions prescribed in (\ref{ICAFWeq1a})-(\ref{ICAFWeq3a}) , the following proposition holds.
\begin{proposition}\label{prop1}(Barnsley \cite{B1})
The IFS $\{X; w_n: n\in \mathbb{N}_{N-1}\}$ defined above admits a unique
attractor $G,$ and $G$ is the graph of a continuous function
$g:I\to \mathbb{R}$ which obeys $g(x_i)=y_i$\;for $i \in \mathbb{N}_N$.
\end{proposition}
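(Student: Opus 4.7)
The plan is to prove the proposition via the classical Read--Bajraktarević (RB) operator approach, then identify its fixed point with the attractor.

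First I would introduce the space
\[
\mathcal{C}^*(I)=\{f\in C(I):\, f(x_1)=y_1,\ f(x_N)=y_N\},
\]
equipped with the uniform metric $d_\infty(f,h)=\sup_{x\in I}|f(x)-h(x)|$. Since it is a closed subset of the Banach space $(C(I),\|\cdot\|_\infty)$, it is a complete metric space. On $\mathcal{C}^*(I)$ I define the RB operator $T$ piecewise by
\[
(Tf)(x) = F_n\bigl(L_n^{-1}(x),\, f(L_n^{-1}(x))\bigr),\qquad x\in I_n,\ n\in\mathbb{N}_{N-1}.
\]
The first task is to check that $Tf\in\mathcal{C}^*(I)$. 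Inside each $I_n$ continuity is clear from the continuity of $L_n^{-1}$, $f$, and $F_n$. At the interior knots $x_{n+1}$ one must verify that the definitions from $I_n$ and $I_{n+1}$ agree, and this is where the endpoint conditions \eqref{ICAFWeq1a} and \eqref{ICAFWeq3a} enter: evaluating both sides gives $F_n(x_N,f(x_N))=F_n(x_N,y_N)=y_{n+1}$ on the one hand and $F_{n+1}(x_1,f(x_1))=F_{n+1}(x_1,y_1)=y_{n+1}$ on the other. The boundary conditions $(Tf)(x_1)=y_1$ and $(Tf)(x_N)=y_N$ follow similarly.

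Next I would show that $T$ is a contraction. For $f,h\in\mathcal{C}^*(I)$ and $x\in I_n$, the Lipschitz estimate in \eqref{ICAFWeq3a} yields
\[
|(Tf)(x)-(Th)(x)| = \bigl|F_n(L_n^{-1}(x),f(L_n^{-1}(x)))-F_n(L_n^{-1}(x),h(L_n^{-1}(x)))\bigr| \le r_n\, \|f-h\|_\infty.
\]
Taking $r=\max_{n} r_n<1$ gives $d_\infty(Tf,Th)\le r\, d_\infty(f,h)$. The Banach Fixed Point Theorem then produces a unique $g\in\mathcal{C}^*(I)$ with $Tg=g$, that is, the functional equation $g(L_n(x))=F_n(x,g(x))$ for all $x\in I$ and all $n\in\mathbb{N}_{N-1}$. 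Setting $x=x_1$ and using \eqref{ICAFWeq1a}, \eqref{ICAFWeq3a} gives $g(x_n)=y_n$ for $n\in\mathbb{N}_{N-1}$, and $g(x_N)=y_N$ by construction, so $g$ interpolates the data.

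The step I expect to require the most care is identifying $G:=\{(x,g(x)):x\in I\}$ with the attractor of the IFS. For this I would verify $W(G)=G$ directly: using the functional equation,
\[
w_n(G)=\{(L_n(x),F_n(x,g(x))):x\in I\}=\{(L_n(x),g(L_n(x))):x\in I\}=\{(t,g(t)):t\in I_n\},
\]
and taking the union over $n\in\mathbb{N}_{N-1}$ recovers $G$ since $I=\bigcup_n I_n$. Because $g$ is continuous on the compact interval $I$, its graph $G$ is a nonempty compact subset of $X$, hence $G\in H(X)$. The uniqueness of the attractor guaranteed by Banach's theorem applied to the Hutchinson map on $(H(X),h)$ then forces $G$ to coincide with that attractor, completing the proof.
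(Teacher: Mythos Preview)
Your proof is correct and follows precisely the approach the paper outlines immediately after stating the proposition: the paper introduces the Read--Bajraktarevi\'{c} operator $T$ on the complete metric space $\mathcal{G}=\{h\in C(I): h(x_1)=y_1,\ h(x_N)=y_N\}$ (your $\mathcal{C}^*(I)$), notes that $T$ is a contraction with factor $r^*=\max_n r_n<1$, and identifies the FIF $g$ as its fixed point satisfying the functional equation \eqref{ICAFWeq4aa}. Your additional verification that the graph of $g$ is invariant under the Hutchinson map, and hence coincides with the unique attractor, fills in a detail the paper leaves implicit.
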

\begin{definition}
The aforementioned function $g$  whose graph is the attractor of an IFS is called a \textbf{Fractal
Interpolation Function} (FIF) or a \textbf{self-referential function} corresponding to the IFS $\{X;\ w_n:n\in \mathbb{N}_{N-1}\}.$
\end{definition}
The above fractal interpolation function $g$ is obtained as the fixed point of the Read-Bajraktarevi\'{c} (RB) operator $T$ on a complete metric space $(\mathcal{G}, \rho)$ defined as
\begin{equation*}
(Th) (x)=  F_n\left(L_n^{-1}(x), h\circ L_n^{-1}(x)\right)\;
\forall~ x\in I_n ,\; n\in \mathbb{N}_{N-1},
\end{equation*}
where $\mathcal{G} :=\{h: I \rightarrow \mathbb{R}~|~ h~ \text{is continuous on}~ I,~ h(x_1)=y_1, h(x_N)=y_N\}$ equipped with the metric $\rho(h,h^*)=\max\{|h(x)-h^*(x)|: x \in I\}$ for $h,h^* \in \mathcal{G}$. It can be seen that $T$ is a contraction mapping on $(\mathcal{G}, \rho)$ with a contraction factor
$r^*:= \max\{r_n: n \in \mathbb{N}_{N-1}\}<1$. The fixed point of $T$ is the FIF $g$ corresponding to the IFS $\mathcal{I}$. Therefore, $g$ satisfies the functional equation:
\begin{equation}\label{ICAFWeq4aa}
g(x) = F_n\left(L_n^{-1}(x),g \circ L_n^{-1}(x)\right),\; x\in
I_n,\; n\in \mathbb{N}_{N-1},
\end{equation}
The most extensively studied FIFs in theory and applications  so
far are defined by the mappings:
\begin{equation}\label{ICAFWeq5a}
L_n(x)=a_nx+b_n,~  F_n(x,y)=\alpha_n y + R_n(x),~
n\in \mathbb{N}_{N-1}.
\end{equation}
Here $-1< \alpha_n<1$
and $R_n : I \to \mathbb{R}$ are suitable continuous functions
satisfying (\ref{ICAFWeq3a}).
The parameter $\alpha_n$ is called a scaling factor of the
transformation $w_n$, and $\alpha = (\alpha_1, \alpha_2, \dots,
\alpha_{N-1})$ is the scale vector corresponding to the IFS.

\subsection{$\alpha$-fractal function}\label{ICMCsec2b}
Barnsley and Navascués \cite{B1,N1} observed that the concept of FIFs can be used to generalize
a class of fractal functions associated with a given real-valued continuous function $f$ on a
compact interval $I$. Let $f \in \mathcal{C}(I)$ be a continuous function and consider the case:
\begin{equation}\label{ICMCeq5}
q_n(x)=f\circ L_n(x)-\alpha_n b(x),
\end{equation}
where $b :I \rightarrow \mathbb{R}$ is a continuous map that fulfills the conditions $b(x_1)=f(x_1)$, $b(x_N)=f(x_N)$, and $b\neq f$. Here the interpolation data are $\{(x_i, f(x_i)): i \in \mathbb{N}_N\}$. We define the $\alpha$-fractal function corresponding to $f$ in the following:
\begin{definition}
The continuous function
$f^\alpha:I \rightarrow \mathbb{R}$ whose graph is the attractor
of the IFS defined by (\ref{ICAFWeq5a})-(\ref{ICMCeq5}) is referred to as \textbf{$\alpha$-fractal function} (fractal approximation of $f$)
associated with f, with respect to ``base function" $b$, scale vector $\alpha$, and the partition $\mathcal{D}$.
\end{definition}
According to (\ref{ICAFWeq4aa}) , $f^\alpha$ satisfies the functional equation:
\begin{equation}\label{ICMCeq6}
f^\alpha(x)=f(x)+\alpha_n[(f^\alpha-b)\circ L_n^{-1}(x)]~\forall x\in
I_n,\; n\in \mathbb{N}_{N-1}.
\end{equation}
The $\alpha$-fractal function $f^\alpha$ obtained by perturbing a given continuous function  $f \in \mathcal{C}(I)$ with the help of a finite sequence of base functions $B:=\{b_n \in \mathcal{C}(I), ~b_n(x_1)=f(x_1),~ b_n(x_N)=f(x_N),~ b_n\neq f\}$ instead of a single base function is more advantageous. For instance, in generalizing rational splines with different shape parameters in different subintervals determined by interpolation points. That is, consider
\begin{equation}\label{ICMCeq7}
q_n(x)=f\circ L_n(x)-\alpha_n b_n(x).
\end{equation}
According to (\ref{ICAFWeq4aa}) , $f^\alpha$ satisfies the functional equation:
\begin{equation}\label{ICMCeq8}
f^\alpha(x)=f(x)+\alpha_n[(f^\alpha-b_n)\circ L_n^{-1}(x)]~\forall~ x\in
I_n,\; n\in \mathbb{N}_{N-1}.
\end{equation}
Note that for $\alpha=0$, $f^\alpha = f$. Thus aforementioned equation may be treated as an entire family of functions $f^\alpha$ with $f$ as its germ. By this method one can define fractal analogues of any continuous function.
\subsection{Smooth $\alpha$-Fractal Functions}\label{ICMCsec2d}
To construct $\alpha$-fractal function $f^\alpha$, the procedure can be easily carried out, which is the content of the following proposition. Details may be consulted in \cite{KC}. Let us introduce the following notation for a continuous function $g$ defined on a compact interval $J$ :
\begin{align*}
m(g;J)= min~\{g(x): x\in J\},~~~~~~M(g;J)= max~\{g(x): x\in J\}.
\end{align*}
\begin{proposition}\label{prop3}
Let $f \in \mathcal{C}^r(I)$ be such that $M_1\le f^{(r)}(x)\le M_2$ for all $x\in I$ and for some suitable constant $M_1$ and $M_2$. The $\alpha$-fractal function $f^\alpha$ \big(cf. (\ref{ICMCeq6})\big) corresponding to $f$ satisfies $M_1\le (f^\alpha)^{(r)}(x)\le M_2$ for all $x\in I$, provided the finite sequence of base functions  satisfying $B=\{b_n \in \mathcal{C}^r(I), ~b_n^{(k)}(x_1)=f^{(k)}(x_1),~ b_n^{(k)}(x_N)=f_n^{(k)}(x_N),~ b_n\neq f, k=0,1,\dots,r\}$ and the scaling factors $|\alpha_n|<a_n^{r}$ for all $n \in \mathbb{N}_{N-1}$ obeying the following additional conditions are selected.
\begin{align*}
max\left\{\dfrac{a_n^{r}[M_1-m(f^{(r)};I_n)]}{M_2-m(b_n^{(r)};I)},
   \dfrac{-a_n^{r}[M_2-M(f^{(r)};I_n)]}{M(b_n^{(r)};I)-M_1}\right\} \le \alpha_n\nonumber\\
\le min\left\{\dfrac{a_n^{r}[m(f^{(r)};I_n)-M_1]}{M(b_n^{(r)};I)-M_1},
   \dfrac{a_n^{r}[M_2-M(f^{(r)};I_n)]}{M_2-m(b_n^{(r)};I)}\right\}
\end{align*}
\end{proposition}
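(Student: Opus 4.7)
The plan is to identify $(f^\alpha)^{(r)}$ as the fixed point of a Read--Bajraktarevi\'c--type operator acting on $\mathcal{C}(I)$, and then to verify that the closed convex set $S=\{h\in\mathcal{C}(I):M_1\le h(x)\le M_2\text{ for all }x\in I\}$ is invariant under that operator. Since $S$ is closed in the uniform norm, Banach's fixed point theorem will then force the fixed point itself to lie in $S$.

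\textbf{Step 1 (smoothness).} I would first invoke the standard result for smooth $\alpha$-fractal functions (cf.~\cite{KC}): the endpoint matching conditions $b_n^{(k)}(x_1)=f^{(k)}(x_1)$, $b_n^{(k)}(x_N)=f^{(k)}(x_N)$ for $k=0,1,\dots,r$, combined with the strict bound $|\alpha_n|<a_n^r$, ensure that $f^\alpha\in\mathcal{C}^r(I)$, so that $(f^\alpha)^{(r)}$ makes sense on all of $I$.

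\textbf{Step 2 (functional equation for the $r$-th derivative).} Differentiating (\ref{ICMCeq8}) $r$ times on the interior of $I_n$, and using $(L_n^{-1})'(x)=1/a_n$, yields
\begin{equation*}
(f^\alpha)^{(r)}(x)=f^{(r)}(x)+\frac{\alpha_n}{a_n^{r}}\bigl[(f^\alpha)^{(r)}\circ L_n^{-1}(x)-b_n^{(r)}\circ L_n^{-1}(x)\bigr],\qquad x\in I_n.
\end{equation*}
Hence $(f^\alpha)^{(r)}$ is the unique fixed point in $\mathcal{C}(I)$ of the operator
\begin{equation*}
(T^{*}h)(x)=f^{(r)}(x)+\frac{\alpha_n}{a_n^{r}}\bigl[h\circ L_n^{-1}(x)-b_n^{(r)}\circ L_n^{-1}(x)\bigr],\quad x\in I_n,
\end{equation*}
which is a uniform contraction with factor $\max_n|\alpha_n|/a_n^{r}<1$.

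\textbf{Step 3 (invariance of $S$).} For $h\in S$ and $x\in I_n$, set $u:=h(L_n^{-1}(x))-b_n^{(r)}(L_n^{-1}(x))$, so that $u\in[M_1-M(b_n^{(r)};I),\ M_2-m(b_n^{(r)};I)]$, and $f^{(r)}(x)\in[m(f^{(r)};I_n),M(f^{(r)};I_n)]$. Write $(T^{*}h)(x)=f^{(r)}(x)+(\alpha_n/a_n^{r})u$ and split on the sign of $\alpha_n$. Requiring $(T^{*}h)(x)\le M_2$ gives the two upper bounds on $\alpha_n$ and $-\alpha_n$ that match the right-hand side of the hypothesis; requiring $(T^{*}h)(x)\ge M_1$ gives the two lower bounds matching the left-hand side. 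Thus the bracketed $\max$--$\min$ conditions on $\alpha_n$ are precisely the conditions that $T^{*}(S)\subseteq S$.

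\textbf{Step 4 (conclusion).} Since $S$ is closed in $(\mathcal{C}(I),\|\cdot\|_\infty)$ and $T^{*}$ is a contraction mapping $S$ into itself, Banach's theorem (restricted to $S$) yields the unique fixed point inside $S$; by uniqueness on $\mathcal{C}(I)$, this fixed point is $(f^\alpha)^{(r)}$, so $M_1\le(f^\alpha)^{(r)}(x)\le M_2$ on $I$.

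The main obstacle is the bookkeeping in Step~3: each of the four inequalities $(T^{*}h)(x)\gtrless M_i$ depends on whether $\alpha_n$ is nonnegative or negative (so that one pairs the extreme value of $u$ correctly with the corresponding extreme value of $f^{(r)}$ on $I_n$), and then one must rearrange each inequality cleanly into a bound on $\alpha_n$. Showing the four resulting bounds coincide with those in the stated $\max$--$\min$ inequality is where all the work lies; the contraction/fixed-point argument itself is routine once invariance is in hand.
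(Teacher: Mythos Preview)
The paper does not actually prove this proposition: it is stated in Section~\ref{ICMCsec2d} with the remark ``Details may be consulted in \cite{KC}'' and no argument is given. Your proposed proof is correct and is precisely the standard argument one finds in the literature on smooth $\alpha$-fractal functions (including \cite{KC}): differentiate the functional equation $r$ times to obtain a Read--Bajraktarevi\'c operator $T^*$ whose fixed point is $(f^\alpha)^{(r)}$, then verify that the order interval $S=\{h:M_1\le h\le M_2\}$ is $T^*$-invariant under the stated bounds on $\alpha_n$, and conclude by uniqueness of the fixed point. Your case split on the sign of $\alpha_n$ in Step~3 is the right bookkeeping, and the four resulting inequalities do rearrange to the four terms in the $\max$--$\min$ bracket exactly as you indicate. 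There is nothing to compare against here; your outline is both correct and the expected approach.
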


\section{Construction of $\alpha$-Fractal Rational Quartic Spline with Shape Parameter}\label{ICAFWsec2}
\noindent In this section, we construct $\alpha$-fractal rational quartic spline with quartic numerator and linear denominator with one family of shape parameter \cite{KCFractal2}, which generalize a class of classical rational quartic interpolants described in \cite{ABD}.\\
Let us first construct $\alpha$-fractal rational quartic spline based on function values and derivative values. For this, consider a set of Hermite data $\mathcal{D}=\{(x_n, y_n,d_n) \in \mathbb{R}^3: n \in \mathbb{N}_N\}$ satisfying $ x_1<x_2<\dots<x_N$. A traditional nonrecursive rational quartic
spline $Q \in \mathcal{C}^1(I)$ associated with $\mathcal{D}$ is defined in a piecewise manner (see \cite{ABD} for details). For
$\theta:=\frac{x-x_1}{x_N-x_1}$ and $h_n=x_{n+1}-x_n$, $ x \in I$,
\begin{equation} \label{ICAFWeq1}
Q\big(L_n(x)\big)= \frac{A_n^\diamond(1-\theta)^4 +B_n^\diamond(1-\theta)^3 \theta
+ C_n^\diamond(1-\theta)^2 \theta^2  +D_n^\diamond(1-\theta) \theta^3+ E_n^\diamond \theta^4}{\lambda_n(1-\theta)+ \theta},
\end{equation}
where $A_n^\diamond = \lambda_n y_n$,~~ $B_n^\diamond = (3\lambda_n+1)y_n+\lambda_n h_n d_n$,~~ $C_n^\diamond = 3\lambda_ny_{n+1}+3y_n$,~~ $D_n^\diamond = (\lambda_n+3)y_{n+1}-h_nd_{n+1}$,~~ $E_n^\diamond =y_{n+1}$ and $\lambda_n>0$ are free shape (tension) parameter.
The rational interpolant $Q$ satisfies the Hermite interpolation conditions, $Q(x_n)= y_n$ and $Q^{(1)}(x_n)=d_n,$ for $n\in\mathbb{N}_N$, where the values of $d_n$ are estimated or given.
\\To develop the $\alpha$-fractal rational quartic spline
corresponding to $Q$, assume $|\alpha_n| \le a_n,$ and select a family $B=\{b_n \in \mathcal{C}^1(I): n
\in \mathbb{N}_N\}$ satisfying the conditions $b_n{(x_1)} = Q(x_1)= y_1$,
$b_n{(x_N)}= Q(x_N)=y_N$, $b_n^{(1)}{(x_1)}=Q^{(1)}(x_1)=d_1$, and
$b_n^{(1)}{(x_N)}=Q^{(1)}{(x_N)}=d_N$. There are variety of choices for $B$. For our convenience, we take $b_n$ to be a rational function of similar form as that of the classical interpolant $Q$. For $n\in \mathbb{N}_{N-1}$, $x \in I$, and $\theta:=
\frac{x-x_1}{x_N-x_1}$, our choice for $b_n$ is
\begin{equation}\label{ICAFWeq2}
b_n (x)= \frac{A_n^\triangleleft(1-\theta)^4 +B_n^\triangleleft(1-\theta)^3 \theta
+ C_n^\triangleleft(1-\theta)^2 \theta^2  +D_n^\triangleleft(1-\theta) \theta^3+ E_n^\triangleleft\theta^4}{\lambda_n(1-\theta)+ \theta},
\end{equation}
where the coefficients $A_n^\triangleleft$, $B_n^\triangleleft$, $C_n^\triangleleft$, $D_n^\triangleleft$, and $E_n^\triangleleft$
are determined through the conditions $b_n(x_1)=y_1$,
$b_n(x_N)=y_N$, $b_n^{(1)}(x_1) = d_1$, $b_n^{(1)}(x_N)= d_N$.
After applying these conditions on $b_n$, we can easily get
$A_n^\triangleleft =  \lambda_n y_1,\;\; B_n^\triangleleft = (3\lambda_n+1) y_1+(x_N-x_1) \lambda_n d_1,
\;\; C_n^\triangleleft = 3\lambda_ny_N+3y_1,\;\;D_n^\triangleleft = (\lambda_n+3)y_N-(x_N-x_1) d_N,\;\; E_n^\triangleleft =y_N.$
Consider the $\alpha$-fractal rational quartic spline corresponding
to $Q$ as
\begin{equation}\label{ICAFWeq3}
Q^\alpha \big(L_n (x)\big)= \alpha_n Q^\alpha(x) +
Q\big(L_n(x)\big)-\alpha_n b_n(x),~ x\in I,~ n\in \mathbb{N}_{N-1}.
\end{equation}
Using (\ref{ICAFWeq1}) and (\ref{ICAFWeq2}) in (\ref{ICAFWeq3}), we have
\begin{eqnarray}\label{ICAFWeq4}
Q^\alpha\big(L_n (x)\big)= \alpha_n Q^\alpha(x)+
\frac{P_n(x)}{Q_n(x)},
\end{eqnarray}
\begin{eqnarray*}
P_n(x) &=& A_n (1-\theta)^4 +B_n(1-\theta)^3 \theta+C_n (1-\theta)^2 \theta^2  +D_n (1-\theta) \theta^3+E_n \theta^4\nonumber\\
Q_n(x)&=&~\lambda_n (1-\theta)+\theta,~ n\in \mathbb{N}_{N-1},~ \theta=
\frac{x-x_1}{x_N-x_1}\nonumber,
\end{eqnarray*}
where,
$A_n=\lambda_n(y_n-\alpha_n y_1), B_n= \lambda_n\{h_nd_n-\alpha_n (x_N-x_1)d_1\}+(3\lambda_n+1)(y_n-\alpha_n y_1), C_n= 3(y_n-\alpha_n y_1)+3\lambda_n(y_{n+1}-\alpha_n y_N), D_n=(\lambda_n+3)(y_{n+1}-\alpha_ny_N)-\{h_nd_{n+1}-\alpha_n(x_N-x_1) d_N\}, E_n=(y_{n+1}-\alpha_n y_N)$.\\
\begin{proposition}\label{prop4}
Let $Q \in \mathcal{C}^r(I)$. Suppose $\mathcal{D}=\{x_1,x_2,\dots,x_N\}$ be an arbitrary partition on $I$ satisfying $ x_1<x_2<\dots<x_N$. Let $|\alpha_n|<a_n^{r}$ for all $n \in \mathbb{N}_{N-1}$. Further suppose that $B=\{b_n\in \mathcal{C}^r(I):n \in \mathbb{N}_{N-1}\}$ fulfills $b_n^{(k)}(x_1)= Q^{(k)}(x_1)$, $b_n^{(k)}(x_N)= Q^{(k)}(x_N)$ for $k=0,1,\dots, r$. Then the corresponding fractal function $Q^\alpha$ is $r$-smooth, and $(Q^\alpha)^{(k)}(x_n)= Q^{(k)}(x_n)$ for  $n\in \mathbb{N}_N$ and $k=0,1,\dots, r$.
\end{proposition}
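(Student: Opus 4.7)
The plan is to reduce the statement to the standard construction of smooth $\alpha$-fractal functions by introducing, for each $k \in \{0,1,\dots,r\}$, an auxiliary IFS whose unique continuous fixed point will turn out to be $(Q^\alpha)^{(k)}$. The candidate IFS has the usual form (\ref{ICAFWeq5a}) with maps
\begin{equation*}
F_{n,k}(x,y) = \tilde{\alpha}_{n,k}\, y + Q^{(k)}\bigl(L_n(x)\bigr) - \tilde{\alpha}_{n,k}\, b_n^{(k)}(x), \qquad \tilde{\alpha}_{n,k} := \alpha_n / a_n^{k},
\end{equation*}
built over the Hermite-type data $\{(x_i, Q^{(k)}(x_i)): i \in \mathbb{N}_N\}$.

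First I would verify the hypotheses of Proposition \ref{prop1} for each such IFS. Contractivity in the second variable follows from $|\alpha_n| < a_n^r$ together with $0<a_n<1$, which yields $|\tilde{\alpha}_{n,k}| < a_n^{r-k} \le 1$ for $0 \le k \le r$. The node-matching conditions in (\ref{ICAFWeq3a}) reduce, after using $L_n(x_1)=x_n$ and $L_n(x_N)=x_{n+1}$, to the identities $b_n^{(k)}(x_1)=Q^{(k)}(x_1)$ and $b_n^{(k)}(x_N)=Q^{(k)}(x_N)$, which are precisely the hypotheses imposed on the base family $B$. Proposition \ref{prop1} then delivers a unique continuous fixed point $g_k:I\to\mathbb{R}$ with $g_k(x_n)=Q^{(k)}(x_n)$ for $n\in \mathbb{N}_N$ and
\begin{equation*}
g_k\bigl(L_n(x)\bigr) = \tilde{\alpha}_{n,k}\, g_k(x) + Q^{(k)}\bigl(L_n(x)\bigr) - \tilde{\alpha}_{n,k}\, b_n^{(k)}(x).
\end{equation*}

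Next I would prove by induction on $k$ that $g_k = (Q^\alpha)^{(k)}$. For $k=0$ this holds because both $g_0$ and $Q^\alpha$ are fixed points of the RB operator associated with (\ref{ICAFWeq3}). Assuming $g_{k-1}=(Q^\alpha)^{(k-1)}$, one differentiates the functional equation for $g_{k-1}$ with respect to $x$, applies the chain rule on the left, and divides through by $a_n$; after noting that $\tilde{\alpha}_{n,k-1}/a_n = \tilde{\alpha}_{n,k}$, the resulting identity is exactly the equation characterising $g_k$. Uniqueness of the fixed point then gives $g_{k-1}' = g_k$, closing the induction, and evaluating at a node yields $(Q^\alpha)^{(k)}(x_n) = g_k(x_n) = Q^{(k)}(x_n)$.

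The main obstacle is making the formal differentiation above rigorous, since a priori $g_{k-1}$ is only known to be continuous and one must justify that it is classically differentiable with derivative $g_k$. I would handle this by invoking the Barnsley--Harrington-type smoothness argument used in \cite{B,KC} and underlying Proposition \ref{prop3}: once contractivity $|\tilde{\alpha}_{n,k}|<1$ and the matching of $k$-th derivatives of $b_n$ and $Q$ at $x_1$ and $x_N$ are simultaneously verified, the fixed point of the level-$k$ RB operator automatically coincides with the $k$-th derivative of the fixed point at level zero. Iterating this passage from $k=1$ up to $k=r$ then yields $Q^\alpha \in \mathcal{C}^r(I)$ together with the claimed Hermite-type interpolation of all derivatives up to order $r$ at the partition nodes.
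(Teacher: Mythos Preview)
The paper does not actually supply a proof of this proposition: it is stated as a known structural result (in the spirit of the Barnsley--Harrington theorem and the material referred to in \cite{KC}) and is immediately followed by Remark~\ref{ICAFWrem1} with no intervening \texttt{proof} environment. Your proposal is therefore not being compared against an in-paper argument but against the standard literature proof that the paper implicitly invokes.

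That said, your outline \emph{is} the standard argument and is correct. Introducing the level-$k$ IFS with rescaled vertical factors $\tilde\alpha_{n,k}=\alpha_n/a_n^{k}$, checking contractivity via $|\tilde\alpha_{n,k}|<a_n^{\,r-k}\le 1$, and verifying the join conditions from the hypotheses $b_n^{(k)}(x_1)=Q^{(k)}(x_1)$, $b_n^{(k)}(x_N)=Q^{(k)}(x_N)$ is exactly how Barnsley--Harrington smoothness is obtained in this setting. Your identification of the one genuine subtlety---that formal differentiation of the fixed-point equation must be justified because $g_{k-1}$ is a priori only continuous---is apt, and your proposed resolution (appealing to the Barnsley--Harrington mechanism, which shows that the continuous fixed point of the level-$k$ operator is the classical derivative of the level-$(k-1)$ fixed point once the endpoint matching and contractivity hold) is the standard and correct way to close that gap. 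Nothing further is needed.
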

\begin{remark}\label{ICAFWrem1}
For all $n \in \mathbb{N}_{N-1}$, if $\alpha_n=0$, then the $\alpha$-fractal rational quartic spline (\ref{ICAFWeq4}) recovers the classical rational quartic interpolant $Q$ constructed in \cite{ABD}.
\end{remark}
\begin{remark}\label{ICAFWrem2}
If we take  $\lambda_n=\frac{u_n}{v_n}$ for all $n \in \mathbb{N}_{N-1}$, with $u_n$, $v_n$ are non-zero shape parameters such that sign($u_n$)=~sign($v_n$), then ${Q^*}^\alpha\in \mathcal{C}^1[x_1,x_N]$ is the $\alpha$-fractal rational quartic spline with family of two shape parameters $u_n$ and $v_n$,
\begin{eqnarray}\label{ICAFWeq4a}
{Q^*}^\alpha\big(L_n (x)\big)= \alpha_n {Q^*}^\alpha(x)+
\frac{P_n(x)}{Q_n(x)},
\end{eqnarray}
The value of new $P_n(x)$ and $Q_n(x)$ are given as follows:\\
$P_n(x)=~u_n(y_n-\alpha_n y_1) (1-\theta)^4 + [u_n\{h_nd_n-\alpha_n (x_N-x_1)d_1\}+(3u_n+v_n)(y_n-\alpha_n y_1)] (1-\theta)^3  \theta+ [3v_n(y_n-\alpha_n y_1)+3u_n(y_{n+1}-\alpha_n y_N)] (1-\theta)^2 \theta^2  +[(u_n+3v_n)(y_{n+1}-\alpha_ny_N)-v_n\{h_nd_{n+1}-\alpha_n(x_N-x_1) d_N\}] (1-\theta) \theta^3 + v_n(y_{n+1}-\alpha_n y_N) \theta^4,$
~~$Q_n(x)= u_n (1-\theta)+v_n \theta.$
\end{remark}
\noindent For all $n \in \mathbb{N}_{N-1}$, if $\alpha_n=0$ and $\lambda_n=\frac{u_n}{v_n}$, where $u_n$, $v_n$ are non-zero shape parameters such that sign($u_n$)=~sign($v_n$), then the $\alpha$-fractal rational quartic spline recovers the classical rational quartic interpolant $Q^*$ constructed in \cite{WQ}.
\section{Convergence Analysis for $\alpha$-Fractal Rational Quartic Spline}\label{ICAFWsec3}
For $r\in \mathbb{N}$, let $\mathbb{N}_r$ denote the subset $\{1,2,\dots, r\}$ of $\mathbb{N}$.
$\Gamma_n := \max\{\lambda_n,1\}, \xi_n^* := \max\{u_n,v_n\}, \mu_n:= \min\{\lambda_n,1\}, \mu_n^*:= \min\{u_n,v_n\}, k_n :=\max\{|\Phi^{(1)}(x_n)-d_n|,|\Phi^{(1)}(x_{n+1})-d_{n+1}|\}, c_n:=|d_n|+|d_{n+1}|, \Gamma:=\max\{\Gamma_n: n\in \mathbb{N}_{N-1}\}, \mu:= \min\{\mu_n: n\in \mathbb{N}_{N-1}\}, k:=\max\{k_n: n\in \mathbb{N}_{N-1}\},
c:=\max\{c_n: n\in \mathbb{N}_{N-1}\}, h_n:=x_{n+1}-x_n, \Delta_n:=\frac{y_{n+1}-y_n}{h_n}~\text{for}~ n\in \mathbb{N}_{N-1},  h :=\max\{h_n: n\in \mathbb{N}_{N-1}\}$.
\begin{theorem}\label{ICAFWthm1}\cite{KCFractal2}
Let $Q^\alpha$ be the $\alpha$-fractal rational quartic spline for the original function $\Psi\in\mathcal{C}^4[x_1,x_N]$  with respect to the data $\{(x_n, y_n, d_n) : n\in \mathbb{N}_N \}$. Then,\\
\begin{equation*}
\|\Psi-Q^\alpha \|_{\infty}\le\frac{h^4}{384}\|\Phi^{(4)}\|_{\infty}+\frac{kh}{4}+\frac{h\xi c}{16\mu}+\frac {|\alpha|_{\infty}}{ 1 -|\alpha|_{\infty}} (\|y \|_{\infty}+\frac{1}{4}h\|d \|_{\infty}+  K_0).
\end{equation*}
\end{theorem}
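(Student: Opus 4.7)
The plan is to split the total error by the triangle inequality into the classical approximation error and the fractal perturbation:
$$\|\Psi - Q^\alpha\|_\infty \;\le\; \|\Psi - Q\|_\infty + \|Q - Q^\alpha\|_\infty,$$
where $Q$ is the classical rational quartic interpolant of (\ref{ICAFWeq1}), which by Remark \ref{ICAFWrem1} coincides with the $\alpha=0$ specialization of $Q^\alpha$ and with the construction of \cite{ABD}. The first summand will supply the three data-dependent terms, and the second summand will supply the term with the factor $|\alpha|_\infty/(1-|\alpha|_\infty)$.

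For the classical error, the aim is to prove on each subinterval $I_n$ the local bound
$$\|\Psi - Q\|_{\infty,I_n} \;\le\; \frac{h_n^4}{384}\,\|\Phi^{(4)}\|_\infty + \frac{k_n h_n}{4} + \frac{h_n\,\xi_n^*\,c_n}{16\,\mu_n},$$
and then take the maximum over $n\in\mathbb{N}_{N-1}$. I would obtain the $h^4$ term from a Peano-kernel / Taylor-remainder representation of the rational quartic Hermite interpolant in the ideal case $d_n=\Psi^{(1)}(x_n)$; the $kh/4$ term from propagating the derivative mismatch $|\Phi^{(1)}(x_n)-d_n|\le k_n$ through the coefficients $B_n^\diamond,D_n^\diamond$ and using $\max_\theta\,\theta(1-\theta)=1/4$; and the $h\xi c/(16\mu)$ term from quantifying how the denominator $\lambda_n(1-\theta)+\theta$ departs from the purely polynomial case, bounded below by $\mu_n$. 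This is essentially the calculation in \cite{ABD,WQ}, so no fundamentally new ideas are needed here.

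For the fractal perturbation, I would apply the self-referential identity (\ref{ICAFWeq3}) in the form
$$Q^\alpha(L_n(x)) - Q(L_n(x)) \;=\; \alpha_n\bigl(Q^\alpha(x) - b_n(x)\bigr),\qquad x\in I,\;n\in\mathbb{N}_{N-1}.$$
Since $I = \bigcup_n I_n$, taking the uniform norm and using the triangle inequality gives
$$\|Q^\alpha - Q\|_\infty \;\le\; |\alpha|_\infty\bigl(\|Q^\alpha - Q\|_\infty + \|Q\|_\infty + \max_n\|b_n\|_\infty\bigr),$$
which, after rearranging, yields
$$\|Q^\alpha - Q\|_\infty \;\le\; \frac{|\alpha|_\infty}{1-|\alpha|_\infty}\bigl(\|Q\|_\infty + \max_n\|b_n\|_\infty\bigr).$$
A direct estimate from the explicit form (\ref{ICAFWeq1}) — using the positivity of $\lambda_n$ to treat the rational basis as a non-negative partition-of-unity combination of the $y_n$, with a residual proportional to $h_n d_n$ whose Bernstein-type weight $\theta(1-\theta)^3+\theta^3(1-\theta)$ is bounded by $1/4$ — will give $\|Q\|_\infty \le \|y\|_\infty + \tfrac14 h\|d\|_\infty$, while the analogous bound on the base functions, which share the same rational form and depend only on the boundary data and $\lambda_n$, is absorbed into the constant $K_0$.

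The main technical obstacle I anticipate is the derivation of the sharp constants $1/384$, $1/4$, and $1/16$ in the classical error bound; this is a computation in Peano-kernel analysis for the specific rational quartic Hermite scheme and must be done carefully on each $I_n$ after the affine reparametrization by $L_n$. A secondary subtlety is ensuring that the estimate $\|Q\|_\infty \le \|y\|_\infty + \tfrac14 h\|d\|_\infty$ is uniform in $n$ and valid for all admissible $\lambda_n>0$, so that the $\lambda_n$-dependence is fully absorbed into $\xi_n^*/\mu_n$ rather than leaking into the fractal correction term.
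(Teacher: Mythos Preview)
The paper does not actually give a proof of this theorem; it is stated with the citation \cite{KCFractal2} and no argument is supplied in the present text. Your proposed approach --- splitting via the triangle inequality into the classical approximation error $\|\Psi-Q\|_\infty$ and the fractal perturbation $\|Q-Q^\alpha\|_\infty$, then handling the latter through the self-referential identity (\ref{ICAFWeq3}) to extract the factor $|\alpha|_\infty/(1-|\alpha|_\infty)$ --- is the standard route for such results and is entirely consistent with what the paper does in the analogous bivariate convergence proof (Theorem~\ref{ICAFWthm1} in Section~6), where the same triangle inequality $\|\Phi-H\|_\infty\le\|\Phi-C\|_\infty+\|C-H\|_\infty$ is used and the fractal-versus-classical discrepancy is bounded by invoking ``proposition 4.2 appeared in \cite{KCFractal2}.'' So your plan is correct and aligned with the method the author evidently relies on.
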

\subsection{Containment of the Graph of the $\alpha$-Fractal Rational Quartic Spline within a Rectangle}\label{r2pps52}
Given data set $\{(x_i,y_i):i\in \mathbb{N}_N\}$, the problem is to
find conditions on the scaling factors and shape parameters of the rational quartic IFS so that the graph $G(Q^\alpha)=\{(x,Q^\alpha(x)): x \in I\}$ of the corresponding $\alpha$-fractal rational quartic spline $Q^\alpha$ lies
within the prescribed rectangle $R= I \times [c,d]$, where $c< \min\{y_i: i\in
\mathbb{N}_N\}$ and $d> \max\{y_i: i\in \mathbb{N}_N\}$. Graph $G$ is the attractor of the IFS
$\mathcal{I}=\Big\{I\times\mathbb{R};\big(L_n(x), F_n(x,y)=\alpha_n y+ q_n(x)\big):
n\in \mathbb{N}_{N-1}\Big\}$.  The FIF $g$ is obtained by iterating the IFS
$\mathcal{I}$ and $I=[x_1, x_N]$ is the attractor of the IFS $\{\mathbb{R};
L_n(x): n\in \mathbb{N}_{N-1}\}$. Hence, for $G$ to lie within $R$, it suffices
to prove that $ \alpha_n y+ q_n(x) \in [c,d]~ \forall~ (x,y) \in
R$.\\\\
\noindent\textbf{Case-I}  We assume $0 \le \alpha_n <a_n$ for all $n\in
\mathbb{N}_{N-1}$. Let $(x,y) \in R$. Then, with our assumption on $\alpha_n$,
we have $\alpha_n c \le \alpha_n y \le \alpha_n d $. This implies
$\alpha_n c+ \frac{P_n(x)}{Q_n(x)} \le \alpha_n y+ \frac{P_n(x)}{Q_n(x)} \le \alpha_n d + \frac{P_n(x)}{Q_n(x)}$. Consequently, for $G$ to lie within $R$, it suffices to
have the following conditions for all $n\in \mathbb{N}_{N-1}:$
\begin{equation}\label{ICAFWeq26a}
c \le \alpha_n c + \frac{P_n(x)}{Q_n(x)},
\end{equation}
\begin{equation}\label{ICAFWeq26b}
 \alpha_n d + \frac{P_n(x)}{Q_n(x)} \le d.
\end{equation}
Now, (\ref{ICAFWeq26a}) is satisfied if
\begin{equation}\label{ICAFWeq26c}
c(1-\alpha_n) \le \frac{A_n (1-\theta)^4 +B_n(1-\theta)^3 \theta+C_n (1-\theta)^2 \theta^2  +D_n (1-\theta) \theta^3+E_n \theta^4}{\lambda_n (1-\theta)+\theta},
\end{equation}
where the constants $A_n, B_n, C_n$, $D_n$, and $E_n$ are
given in (\ref{ICAFWeq4}). Note that the expression for
$Q_n(x)$ can be written in the degree elevated form as follows:
\begin{equation}\label{ICAFWeq26d}
\begin{split}
\lambda_n (1-\theta)+\theta \equiv~& \lambda_n (1-\theta)^3 + (2\lambda_n+1)(1-\theta)^2 \theta + (\lambda_n+2) (1-\theta) \theta^2+  \theta^3\\ \equiv~& \lambda_n (1-\theta)^4+(3\lambda_n+1)(1-\theta)^3 \theta+
(3\lambda_n+3)(1-\theta)^2 \theta^2+\\~&(\lambda_n+3) (1-\theta) \theta^3+\theta^4.
\end{split}
\end{equation}
Using (\ref{ICAFWeq26d}) in (\ref{ICAFWeq26c}), we have
\begin{equation}\label{ICAFWeq26e}
\begin{split}
&\big[A_n -c (1-\alpha_n)\lambda_n\big](1-\theta)^4 + \big[B_n-c
(1-\alpha_n)(3\lambda_n+1)\big]
(1-\theta)^3\theta+\big[ C_n-\\&c (1-\alpha_n)(3\lambda_n+3)\big]
(1-\theta)^2 \theta^2+ \big[ D_n-c(1-\alpha_n) (\lambda_n+3) \big] (1-\theta)\theta^3+\\&\big[ E_n-c(1-\alpha_n) \big]\theta^4
\ge 0.
\end{split}
\end{equation}
With the substitution $\theta=\frac{\nu}{\nu+1}$,  (\ref{ICAFWeq26e}) is equivalent to\\
\begin{equation}\label{ICAFWeq26f}
\begin{split}
&\big[E_n-c(1-\alpha_n) \big]\nu^4+\big[ D_n-c(1-\alpha_n) (\lambda_n+3) \big]\nu^3 +\big[ C_n-c (1-\alpha_n)(3\lambda_n+3)\big]\cdot\\& \nu^2 + \big[B_n-c
(1-\alpha_n)(3\lambda_n+1)\big]\nu+ \big[A_n -c (1-\alpha_n)\lambda_n\big]\ge0~~~ \forall ~\nu\ge0.
\end{split}
\end{equation}
It can be seen that the polynomial in (\ref{ICAFWeq26e}) is
positive if the following system of inequalities hold:
\begin{equation*}
\begin{split}
&E_n-c(1-\alpha_n) \ge 0, D_n-c(1-\alpha_n) (\lambda_n+3)\ge 0, C_n-c (1-\alpha_n)(3\lambda_n+3) \ge 0,\\
&B_n-c
(1-\alpha_n)(3\lambda_n+1)
\ge 0, A_n -c (1-\alpha_n)\lambda_n\ge 0.
\end{split}
\end{equation*}
Since $\lambda_n>0$, the selection of $\alpha_n$ satisfying
 $\alpha_n \le
\dfrac{y_{n+1}-c}{y_N-c}$ and $\alpha_n \le \dfrac{y_n-c}{y_1-c}$ ensures first and fifth inequality. Let
$\alpha_n \le \min \big\{\dfrac{y_n-c}{y_1-c},
\dfrac{y_{n+1}-c}{y_N-c}\big\}$. By this choice of $\alpha_n$, $C_n-c (1-\alpha_n)(3\lambda_n+3) \ge 0$ is obviously true because $C_n-c (1-\alpha_n)(3\lambda_n+3)\equiv 3[(y_n-\alpha_ny_1)-c (1-\alpha_n)]+3\lambda_n[(y_{n+1}-\alpha_ny_N)-c (1-\alpha_n)]\ge 0$.
Assume,  $\alpha_n \le \dfrac{h_nd_n}{(x_N-x_1)d_1}$, then the condition $B_n-c(1-\alpha_n)(3\lambda_n+1)\ge 0$ is met if $\lambda_n \ge \dfrac{-[(y_n-\alpha_ny_1)-c (1-\alpha_n)]}{3[(y_n-\alpha_ny_1)-c (1-\alpha_n)]+h_nd_n-\alpha_n(x_N-x_1)d_1}$. The condition $D_n-c(1-\alpha_n) (\lambda_n+3)\ge 0$ is met if $\lambda_n \ge -3+\dfrac{h_nd_{n+1}-\alpha_n(x_N-x_1)d_N}{(y_{n+1}-\alpha_ny_N)-c (1-\alpha_n)}.$ Hence, the following conditions are sufficient to verify (\ref{ICAFWeq26a}) if scaling factors and shape parameters chosen as
\begin{equation}\label{ICAFWeq26g}\left.
\begin{split}
0 ~&\le \alpha_n  < \min\Big\{\dfrac{y_n-c}{y_1-c},
\dfrac{y_{n+1}-c}{y_N-c},\dfrac{h_nd_n}{(x_N-x_1)d_1}\Big\},\\
\lambda_n~& \ge \lambda_{1n}:=~ \dfrac{-[(y_n-c)-\alpha_n(y_1-c) ]}{3[(y_n-c)-\alpha_n(y_1-c)]+h_nd_n-\alpha_n(x_N-x_1)d_1},\\
\lambda_n~& \ge \lambda_{2n}:=~ -3+\dfrac{h_nd_{n+1}-\alpha_n(x_N-x_1)d_N}{(y_{n+1}-c)-\alpha_n(y_N-c)}.
\end{split}\right\}
\end{equation}
Substituting the expression for $Q_n(x)$ in (\ref{ICAFWeq26d}), (\ref{ICAFWeq26b}) reduces to\\
$\big[d (1-\alpha_n)\lambda_n-A_n\big](1-\theta)^4 + \big[d
(1-\alpha_n)(3\lambda_n+1)-B_n\big]
(1-\theta)^3\theta+\big[d (1-\alpha_n)(3\lambda_n+3)-C_n\big]
(1-\theta)^2\theta^2+ \big[d(1-\alpha_n)(\lambda_n+3)-D_n \big](1-\theta) \theta^3+ \big[d(1-\alpha_n)-E_n \big]\theta^4
\ge 0$.\\
Now, with the substitution $\theta=\frac{\nu}{\nu+1}$,  we may rewrite it as
$\big[d(1-\alpha_n)-E_n \big]\nu^4+\big[d(1-\alpha_n)(\lambda_n+3)-D_n \big]\nu^3 +\big[d (1-\alpha_n)(3\lambda_n+3)-C_n\big] \nu^2 + \big[d
(1-\alpha_n)(3\lambda_n+1)-B_n\big] \nu+ \big[d (1-\alpha_n)\lambda_n-A_n\big]\ge0~\forall~\nu\ge0$. We proceed as above and the following conditions verify (\ref{ICAFWeq26b}):
\begin{equation}\label{ICAFWeq26h}\left.
\begin{split}
0 ~&\le \alpha_n < \min\Big\{\frac{d-y_n}{d-y_1},
\frac{d-y_{n+1}}{d-y_N},\frac{3(d-y_n)-h_nd_n}{3(d-y_1)-(x_N-x_1)d_1} \Big\},\\
\lambda_n~& \ge \lambda_{3n}:=~ \dfrac{-[(d-y_n)-\alpha_n(d-y_1)]}{3[(d-y_n)-\alpha_n(d-y_1)]-
[h_nd_n-\alpha_n(x_N-x_1)d_1]},\\
\lambda_n~& \ge \lambda_{4n}:=~ -3+\dfrac{\alpha_n(x_N-x_1)d_N-h_nd_{n+1}}{(d-y_{n+1})-\alpha_n(d-y_N)}.
\end{split}\right\}
\end{equation}
Let us denote $\alpha_n^{max}=\min\Big\{a_n,\dfrac{y_n-c}{y_1-c},
\dfrac{y_{n+1}-c}{y_N-c},\dfrac{h_nd_n}{(x_N-x_1)d_1},\dfrac{d-y_n}{d-y_1},
\dfrac{d-y_{n+1}}{d-y_N},\\~~~~~~~~~~~~~~~~~~~~~~~~~~~~
~~~~~~~~~~~~~~~~~~~~~\dfrac{3(d-y_n)-h_nd_n}{3(d-y_1)-(x_N-x_1)d_1}\Big\}$.
\\\\
\noindent\textbf{Case-II} We consider $-a_n< \alpha_n <0$. Let $(x,y) \in R$. In this case (\ref{ICAFWeq26a}) and (\ref{ICAFWeq26b}) will be replaced respectively by
\begin{equation}\label{ICAFWeq26i}
c \le \alpha_n d+ q_n(x) ~~\text{and}~~  \alpha_n c + q_n(x) \le
d.
\end{equation}
To make our considerations concise, we avoid the computational details that yield the following conditions for scaling factors and shape parameters
\begin{equation}\label{ICAFWeq26j}\left.
\begin{split}
\alpha_n ~&> \alpha_n^{min}=\max\big\{-a_n, \dfrac{y_n-c}{y_1-d},
\dfrac{y_{n+1}-c}{y_N-d},\dfrac{-h_nd_n-3
(y_n-c)}{3(d-y_1)-(x_N-x_1)d_1},\dfrac{d-y_n}{c-y_1},\\&
~~~~~~~~~~~~~~~~~~~~~~~~~~~~~~~\dfrac{d-y_{n+1}}{c-y_N}, \dfrac{h_nd_n-3(d-y_n)}{3(y_1+c)+(x_N-x_1)d_1}\big\}, \\
\lambda_n~& \ge \lambda_{5n}:=~ \dfrac{-[\alpha_n(d-y_1)-(c-y_n)]}{3[\alpha_n(d-y_1)-(c-y_n)]+
[h_nd_n-\alpha_n(x_N-x_1)d_1]},\\
\lambda_n~& \ge \lambda_{6n}:=~ -3+\dfrac{h_nd_{n+1}-\alpha_n(x_N-x_1)d_N}{\alpha_n(d-y_N)-(c-y_{n+1})},\\
\lambda_n~& \ge \lambda_{7n}:=~ \dfrac{-[(d-y_n)-\alpha_n(c-y_1)]}{3[(d-y_n)-\alpha_n(c-y_1)]-
[h_nd_n-\alpha_n(x_N-x_1)d_1]},\\
\lambda_n~& \ge \lambda_{8n}:=~ -3+\dfrac{\alpha_n(x_N-x_1)d_N-h_nd_{n+1}}{(d-y_{n+1})-\alpha_n(c-y_N)},
\end{split}\right\}
\end{equation}
Based on the above discussion, we state the following theorem.
\begin{theorem}\label{ICMCthm5}
Suppose a data set $\{(x_i,y_i):i\in \mathbb{N}_N\}$ is given, and $Q^\alpha$ is the corresponding $\alpha$-fractal rational quartic spline  described in
(\ref{ICAFWeq4}). Then the following conditions on the
scaling factors and the shape parameters in each subinterval are
sufficient for the  containment of the graph of $Q^\alpha$ in the
rectangle $R= I \times [c,d]:$
\begin{equation*}
\begin{split}
 \alpha_n ^{min} &< \alpha_n  < \alpha_n ^{max},~\lambda_n >\max\{0, \lambda_{1n}, \lambda_{2n}, \lambda_{3n},
 \lambda_{4n}, \lambda_{5n}, \lambda_{6n}, \lambda_{7n},
 \lambda_{8n}\}.
\end{split}
\end{equation*}
\end{theorem}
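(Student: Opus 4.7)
The plan is to prove the theorem by consolidating the case analysis already developed in the subsection into a single argument, relying on the fact that the attractor $G$ of the IFS lies inside $R$ precisely when $w_n(R) \subset R$ for every $n \in \mathbb{N}_{N-1}$. Since the first coordinate is handled automatically by the $L_n$'s (whose joint attractor is $I$), I only need to verify that $F_n(x,y) = \alpha_n y + q_n(x) \in [c,d]$ for every $(x,y) \in R$ and every $n$. This reduces the set-valued containment problem to a pair of scalar inequalities for each $n$, namely $c \leq \alpha_n y + q_n(x)$ and $\alpha_n y + q_n(x) \leq d$.

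Next, I would split the argument on the sign of $\alpha_n$. In Case I ($0 \leq \alpha_n < a_n$), monotonicity in $y$ means it is enough to test at $y=c$ for the lower bound and at $y=d$ for the upper bound, giving \eqref{ICAFWeq26a} and \eqref{ICAFWeq26b}. In Case II ($-a_n < \alpha_n < 0$), the monotonicity reverses, so the test values swap as recorded in \eqref{ICAFWeq26i}. Each of the four scalar inequalities is then recast, using the degree-elevated identity \eqref{ICAFWeq26d} for $Q_n(x)$, as the nonnegativity on $[0,1]$ of a degree-four polynomial in $(1-\theta,\theta)$ whose five coefficients are explicit affine functions of $\alpha_n$, $\lambda_n$ and the data.

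After that, I would apply the standard trick $\theta = \nu/(\nu+1)$ to send $[0,1)$ bijectively to $[0,\infty)$ and transform each Bernstein-type polynomial inequality into positivity of a polynomial in $\nu$ on $[0,\infty)$. A sufficient condition for this is coefficient-wise nonnegativity, which yields a system of five linear inequalities per case. Solving these inequalities for $\alpha_n$ and $\lambda_n$ gives exactly the bounds $\alpha_n^{\min} < \alpha_n < \alpha_n^{\max}$ together with $\lambda_n \geq \lambda_{jn}$ for $j = 1,\dots,8$ listed in \eqref{ICAFWeq26g} and \eqref{ICAFWeq26j}; the condition $\lambda_n > 0$ is required to begin with so that $Q_n(x) > 0$. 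Combining all eight lower bounds on $\lambda_n$ with the two-sided bound on $\alpha_n$ yields the statement of the theorem.

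The routine but somewhat delicate part is bookkeeping: one must carry the signs through Case II carefully, because there the bound $c(1-\alpha_n)$ is replaced by $c - \alpha_n d$ (and similarly $d(1-\alpha_n)$ by $d - \alpha_n c$), which shifts exactly which terms in the data appear with a factor of $c$ versus $d$. I would organize the proof so that the Case I computation is done in full, then remark that Case II follows by the same template with the substitutions $c(1-\alpha_n) \mapsto c - \alpha_n d$ and $d(1-\alpha_n) \mapsto d - \alpha_n c$, producing $\lambda_{5n}, \lambda_{6n}, \lambda_{7n}, \lambda_{8n}$ in parallel to $\lambda_{1n}, \lambda_{2n}, \lambda_{3n}, \lambda_{4n}$. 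The main conceptual obstacle, and the only nontrivial step, is justifying the passage from pointwise nonnegativity of the Bernstein-basis polynomial on $[0,1]$ to coefficient-wise nonnegativity via the $\nu$-substitution; everything else is the algebra already displayed in the preceding paragraphs.
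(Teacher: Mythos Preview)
Your proposal is correct and follows essentially the same approach as the paper: reduce containment of the attractor to $F_n(R)\subset[c,d]$, split on the sign of $\alpha_n$, use the degree-elevated form \eqref{ICAFWeq26d} of $Q_n$, apply the substitution $\theta=\nu/(\nu+1)$, and impose coefficient-wise nonnegativity to extract the bounds on $\alpha_n$ and $\lambda_n$. The only minor slip is that $\lambda_{3n},\lambda_{4n}$ actually come from \eqref{ICAFWeq26h} rather than \eqref{ICAFWeq26g}, but the logic is identical to what the paper does.
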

\subsection{$\alpha$-Fractal Rational Quartic Spline Above the Line}
\begin{theorem}
Suppose a data set $\{(x_i,y_i):i\in \mathbb{N}_N\}$ lies above the line $t=mx+k$, that is, $y_i>t_i=t(x_i)$ for $i\in \mathbb{N}_N$. The graph of the corresponding $\alpha$-fractal rational quartic spline $Q^\alpha$ lies above the line $t=mx+k$ if the scaling factors $|\alpha_n|<a_n$ and the shape parameters $\lambda_n >0, n\in \mathbb{N}_{N-1},$ further satisfy
\begin{equation*}
\begin{split}
0\le \alpha_n < \min \big\{\dfrac{y_n-t_n}{y_1-t_1},
\dfrac{y_{n+1}-t_{n+1}}{y_N-t_N}\big\} ,~\lambda_n > \max \{0,\lambda_{9n}, \lambda_{10n}\},
\end{split}
\end{equation*}
where
$\lambda_n \ge \lambda_{9n}:=\dfrac{-[2\{(y_n-t_n)-\alpha_n(y_1-t_1)\}+(y_n-t_{n+1})-\alpha_n(y_1-t_N)]}
{[(y_{n+1}-t_n)-\alpha_n(y_N-t_1)+2\{(y_{n+1}-t_{n+1})-\alpha_n(y_N-t_N)\}]}$.
$\lambda_n \ge \lambda_{10n}:= -\dfrac{y_n-t_n-\alpha_n(y_1-t_1)}{l_n},\\ \lambda_n \ge
\lambda_{11n}:= -\dfrac{m_n}{[y_{n+1}-t_{n+1}-\alpha_n(y_N-t_N)]}$,
$l_n=[2(y_n-t_n)+(y_n-t_{n+1})+h_n d_n-\alpha_n\{2(y_1-t_1)+(y_1-t_N)+(x_N-x_1)d_1\}]$,
$m_n=[(y_{n+1}-t_n)+2(y_{n+1}-t_{n+1})-h_n d_{n+1}-\alpha_n\{(y_N-t_1)+2(y_N-t_N)
-(x_N-x_1)d_N\}]$.
\end{theorem}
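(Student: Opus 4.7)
The plan is to mimic the argument of Theorem \ref{ICMCthm5}, replacing the horizontal lower barrier $y=c$ by the slanted barrier $t(x)=mx+k$. Writing the graph of $Q^\alpha$ as the attractor of the IFS $\{I\times\mathbb{R};(L_n,\alpha_n y+q_n(x))\}$ with $q_n(x)=P_n(x)/Q_n(x)$, the graph lies above the line $t$ iff $\alpha_n y+q_n(x)\ge t(L_n(x))$ holds for every $(x,y)$ with $y\ge t(x)$ and every $n\in\mathbb{N}_{N-1}$. Assuming $0\le\alpha_n<a_n$, we have $\alpha_n y\ge\alpha_n t(x)$, so it is sufficient to verify
\[
\frac{P_n(x)}{Q_n(x)}\;\ge\;t(L_n(x))-\alpha_n\,t(x)\qquad\text{for all }x\in I.
\]

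Since $x=x_1+\theta(x_N-x_1)$ and $L_n(x)=x_n+\theta(x_{n+1}-x_n)$, both sides are affine in $\theta$: $t(x)=(1-\theta)t_1+\theta t_N$ and $t(L_n(x))=(1-\theta)t_n+\theta t_{n+1}$. Because $Q_n(x)=\lambda_n(1-\theta)+\theta>0$ (as $\lambda_n>0$), the sufficient inequality is equivalent to
\[
P_n(x)-\bigl[t(L_n(x))-\alpha_n t(x)\bigr]\,Q_n(x)\;\ge\;0\qquad\forall\,\theta\in[0,1].
\]
I would then apply the degree-elevation identity (\ref{ICAFWeq26d}) to write $Q_n$ in the quartic Bernstein basis, and expand $t(L_n(x))-\alpha_n t(x)$, which is linear in $\theta$, first into $(1-\theta)[t_n-\alpha_n t_1]+\theta[t_{n+1}-\alpha_n t_N]$ and then elevate it to quartic form as well. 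The left-hand side then becomes $\sum_{k=0}^{4}c_k\binom{4}{k}^{-1}\binom{4}{k}(1-\theta)^{4-k}\theta^k$ (schematically $\sum \tilde c_k(1-\theta)^{4-k}\theta^k$), and the substitution $\theta=\nu/(\nu+1)$ turns nonnegativity on $[0,1]$ into the demand that every coefficient $\tilde c_k$ be $\ge 0$ for $\nu\ge 0$, exactly as in (\ref{ICAFWeq26f}).

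The five Bernstein coefficients, computed from $A_n,B_n,C_n,D_n,E_n$ of (\ref{ICAFWeq4}), produce the following chain: the $(1-\theta)^4$ and $\theta^4$ coefficients reduce, after dividing by $\lambda_n>0$ and $1$ respectively, to $(y_n-t_n)-\alpha_n(y_1-t_1)\ge 0$ and $(y_{n+1}-t_{n+1})-\alpha_n(y_N-t_N)\ge 0$, which give the stated bounds on $\alpha_n$. The middle coefficient $C_n-(3\lambda_n+3)[t(L_n)-\alpha_n t]\text{-part}$ yields, after solving for $\lambda_n$, the bound $\lambda_n\ge\lambda_{9n}$. The $(1-\theta)^3\theta$ coefficient involves $B_n$ together with the derivative terms $h_nd_n-\alpha_n(x_N-x_1)d_1$ and yields $\lambda_n\ge\lambda_{10n}$; the $(1-\theta)\theta^3$ coefficient involves $D_n$ and the corresponding right-endpoint derivative terms, producing $\lambda_n\ge\lambda_{11n}$.

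The substantive obstacle is not any single inequality but the bookkeeping: one has to keep the affine term $t(L_n(x))-\alpha_n t(x)$ aligned with the quartic Bernstein expansion of $P_n$ and to recognise, once each coefficient has been solved for $\lambda_n$, that the resulting fractions coincide with the $\lambda_{9n},\lambda_{10n},\lambda_{11n}$ displayed in the statement. The positivity of the denominators in those bounds must be checked separately; this is automatic from the already-imposed constraints $\alpha_n<\min\{(y_n-t_n)/(y_1-t_1),(y_{n+1}-t_{n+1})/(y_N-t_N)\}$ on the endpoint coefficients. Finally, invoking Proposition \ref{prop1} (the attractor is the graph of a continuous function) together with the invariance established above yields $Q^\alpha(x)\ge t(x)$ on all of $I$.
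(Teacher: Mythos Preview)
Your proposal is correct and follows essentially the same route as the paper: reduce via the functional equation and the assumption $\alpha_n\ge 0$ to the polynomial inequality $P_n(\theta)-[t(L_n(x))-\alpha_n t(x)]Q_n(\theta)\ge 0$, degree-elevate $Q_n$ to quartic Bernstein form, and impose nonnegativity of the five resulting coefficients (which yields exactly the displayed $A_n^*,\dots,E_n^*$ of the paper and hence the bounds on $\alpha_n$ and $\lambda_{9n},\lambda_{10n},\lambda_{11n}$). The only cosmetic difference is that the paper phrases the reduction as an implication ``$Q^\alpha(x)>t(x)\Rightarrow Q^\alpha(L_n(x))>t(L_n(x))$'' along the iterates of the RB operator, whereas you phrase it as invariance of the half-plane under the IFS maps; these are equivalent.
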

\begin{proof}
Since $Q^\alpha(x_i) =y_i> t_i$ for all $i\in \mathbb{N}_N$, therefore to prove $Q^\alpha(\tau)> m \tau+k$ for all $\tau\in I$, it is sufficient to verify that $Q^\alpha(x)> m x+k, x\in I$ implies $Q^\alpha(L_n(x)) > m L_n(x)+k$ for all $n\in \mathbb{N}_{N-1}$. Assume $Q^\alpha(x)> m x+k$ for some $x$. We need to make sure that
\begin{equation}\label{ICAFWeq26k}
\alpha_n Q(x)+\frac{P_n(\theta)}{Q_n(\theta)}>m(a_nx+b_n)+k.
\end{equation}
We shall impel the scaling parameters such that $0\le\alpha_n<a_n$ for all $n\in \mathbb{N}_{N-1}$. Since $Q_n(\theta)>0$ and keeping in mind the assumptions $Q^\alpha(x)> m x+k$, cross multiplying and rearranging it can be observed that (\ref{ICAFWeq26k}) holds if
\begin{equation}\label{ICAFWeq26l}
\alpha_n(m x+k)Q_n(\theta)+ P_n(\theta)- (ma_nx+mb_n+k)Q_n(\theta)>0, \theta\in[0,1].
\end{equation}
Substituting $x=x_1+\theta(x_N-x_1)$, the expression for $P_n(\theta)$ and using the degree elevated form of $Q_n(\theta)$ from (\ref{ICAFWeq26d}), (\ref{ICAFWeq26l}) reduces to
\begin{equation}\label{ICAFWeq26m}
A_n^*(1-\theta)^4 +B_n^*(1-\theta)^3 \theta
+ C_n^*(1-\theta)^2\theta^2 + D_n^*(1-\theta) \theta^3+ E_n^*\theta^4 >0, \theta \in [0,1],
\end{equation}
where\\
\begin{equation}\label{ICAFWeq26n}
\begin{split}
A_n^*=~& \lambda_n[y_n-t_n-\alpha_n(y_1-t_1)],\\
B_n^*=~& \lambda_n[2(y_n-t_n)+(y_n-t_{n+1})+h_n d_n-\alpha_n\{2(y_1-t_1)+(y_1-t_N)\\&+(x_N-x_1)d_1\}]
+[y_n-t_n-\alpha_n(y_1-t_1)],\\
C_n^*=~& \lambda_n[(y_{n+1}-t_n)-\alpha_n(y_N-t_1)+2\{(y_{n+1}-t_{n+1})-\alpha_n(y_N-t_N)\}]
+\\~&[2\{(y_n-t_n)-\alpha_n(y_1-t_1)\}+(y_n-t_{n+1})-\alpha_n(y_1-t_N)],\\
D_n^*=~&\lambda_n[y_{n+1}-t_{n+1}-\alpha_n(y_N-t_N)]+
[(y_{n+1}-t_n)+2(y_{n+1}-t_{n+1})-h_n d_{n+1}\\&-\alpha_n\{(y_N-t_1)+2(y_N-t_N)
-(x_N-x_1)d_N\}],\\
E_n^*=~& [y_{n+1}-t_{n+1}-\alpha_n(y_N-t_N)].
\end{split}
\end{equation}
With the substitution $\theta=\frac{\nu}{\nu+1}$,  (\ref{ICAFWeq26m}) is equivalent to $E_n^*\nu^4+D_n^*\nu^3 +C_n^* \nu^2 +B_n^* \nu + A_n^*>0$ for all $\nu>0$. The polynomial in (\ref{ICAFWeq26m}) is positive if $A_n^*> 0, B_n^*> 0, C_n^*> 0$, $D_n^*> 0$ and $E_n^*> 0$ are satisfied. It is straight forward to see that $A_n^*> 0$ is satisfied if $\alpha_n<\frac{y_n-t_n}{y_1-t_1}$ and $E_n^*> 0$ if $\alpha_n<\frac{y_{n+1}-t_{n+1}}{y_N-t_N}$. Assume,  $\alpha_n \le \dfrac{(y_{n+1}-t_n)+2(y_{n+1}-t_{n+1})}{(y_N-t_1)+2(y_N-t_N)}$, then the condition  $C_n^* >0$ is met if\\ $\lambda_n > \dfrac{-[2\{(y_n-t_n)-\alpha_n(y_1-t_1)\}+(y_n-t_{n+1})-\alpha_n(y_1-t_N)]}
{[(y_{n+1}-t_n)-\alpha_n(y_N-t_1)+2\{(y_{n+1}-t_{n+1})-\alpha_n(y_N-t_N)\}]}$.
It is plain to see that the additional conditions on the shape parameters $\lambda_n>0$ prescribed in the theorem ensure the positivity of  $B_n^*$ and $D_n^*$. This completes the proof.
\end{proof}

\section{Bicubic Partially Blended  Rational Fractal Interpolation Surface}\label{ICMCICMCSURFACEsec3}
We wish to a construct a  $\mathcal{C}^1$-continuous bivariate function $\Phi:  \mathbb{R} \to \mathbb{R}$ such that $\Phi(x_i,y_j)=z_{i,j}$, $\frac{\partial \Phi}{\partial x}(x_i,y_j)=z_{i,j}^x$, and $\frac{\partial \Phi}{\partial y}(x_i,y_j)=z_{i,j}^y$ for $i \in \mathbb{N}_m,  j \in \mathbb{N}_n$. This is achieved by  blending  the univariate rational quartic FIFs using the partially bicubic Coons technique \cite{Bhm}.
\subsection{Construction of rational quartic spline FIFs (Fractal boundary curves)}
Let $\Delta=\{(x_i,y_j, z_{i,j}): i \in \mathbb{N}_m,  j \in \mathbb{N}_n\}$ be a set of bivariate interpolation data, where $x_1<x_2<\dots<x_{m}$ and $y_1<y_2<\dots<y_{n}$ and  denote  $h_i=x_{i+1}-x_i,~h^*_j=y_{j+1}-y_j,~i \in \mathbb{N}_{m-1},~ j \in \mathbb{N}_{n-1}$. Set $K_{i,j}=I_i\times J_j=[x_i,x_{i+1}]\times[y_j, y_{j+1}];i \in \mathbb{N}_{m-1},~j\in \mathbb{N}_{n-1}$ be the generic subrectangular region and take $K=I\times J=[x_1,x_m]\times[y_1,y_n]$. Let $z_{i,j}^x$ and
$z_{i,j}^y$ are the $x$-partial and $y$-partial derivatives of the original function at the point $(x_i,y_j)$. Consider a surface data set $\{(x_i,y_j, z_{i,j}, z_{i,j}^x, z_{i,j}^y): i \in \mathbb{N}_m,  j \in \mathbb{N}_n\}$ placed on the rectangular grid $K$. It is plain to see that univariate data set obtained by taking sections of $K$ with the line $y=y_j$ (along the $j$-th grid line parallel to $x$-axis), $j \in \mathbb{N}_n$, namely $R_j= \{(x_i,y_j,z_{i,j},z_{i,j}^x):i \in \mathbb{N}_m\}$. Let  $L_i: I \rightarrow I_i$, be affine maps $L_i(x) = a_i x+ b_i$ satisfying $L_i(x_1)=x_i,\ L_i(x_m)=x_{i+1}$. We construct rational quartic spline FIF (fractal boundary curve):
\begin{equation}\label{ICMCSURFACEeq2}
\psi(x,y_j)= \alpha_{i,j} \psi\big( {L_i}^{-1}(x),y_j\big)+ \frac{P_{i,j}(\theta)}{Q_{i,j}(\theta)},
\end{equation}
wherein
\begin{equation*}
\begin{split}
P_{i,j}(\theta)=~& \lambda_{i,j}(z_{i,j}-\alpha_{i,j} z_{1,j})(1-\theta)^4+[\lambda_{i,j}(h_iz^x_{i,j}-\alpha_{i,j} (x_m-x_1)z^x_{1,j})]+(3\lambda_{i+1,j}(z_{i,j}-\alpha_{i,j} z_{1,j}))\cdot
\\ \nonumber
&(1-\theta)^3 \theta+[3(z_{i,j}-\alpha_{i,j} z_{1,j})+3\lambda_{i,j}(z_{i+1,j}-\alpha_{i,j} z_{m,j})](1-\theta)^2 \theta^2+[(\lambda_{i,j})(z_{i+1,j}-\\ \nonumber
&\alpha_{i,j}z_{m,j})-(h_iz^x_{i+1,j}-\alpha_{i,j}(x_m-x_1) z^x_{m,j})](1-\theta) \theta^3+ (z_{i+1,j}-\alpha_{i,j} z_{m,j})\theta^4\\
Q_{i,j}(\theta)=~&\lambda_{i,j} (1-\theta)+\theta,
\theta=~\frac{{L_i}^{-1}(x)-x_1}{x_m-x_1}=\frac{x-x_i}{h_i},~x \in I_i.
\end{split}
\end{equation*}

Similarly, for each  $i \in \mathbb{N}_m$, let us consider the univariate data set by taking sections of $K$ with the line $x=x_i$ (along the $i$-th grid line parallel to $y$-axis), namely $R_i= \{(x_i,y_j, z_{i,j},
z_{i,j}^y): j\in \mathbb{N}_n\}$. Consider the affine maps $L_j^*: [y_1,y_n] \to [y_j, y_{j+1}]$ defined by $L_j^*(y) =c_j y+ d_j$ satisfying $L_j^*(y_1)=y_j$ and $L_j^*(y_n)=y_{j+1}$, $j\in \mathbb{N}_n$.  For a fixed $i \in \mathbb{N}_m$, let $\alpha_{i,j}^*$ be the scaling  factors
along the vertical grid line $x=x_i$ such that $|\alpha_{i,j}^*|<c_j<1$. We construct rational quartic spline FIF (fractal boundary curve)::
\begin{equation}\label{ICMCSURFACEeq3}
\psi^*(x_i,y)= \alpha_{i,j}^* \psi^* \big(x_i, {L_j^*}^{-1}(y)\big)+ \frac{P_{i,j}^*(\phi)}{Q_{i,j}^*(\phi)},
\end{equation}
 where
\begin{equation*}
\begin{split}
P_{i,j}^*(\phi)=~&\lambda^*_{i,j}(z_{i,j}-\alpha^*_{i,j} z_{i,1})(1-\phi)^4+[\lambda^*_{i,j}(h^*_jz^y_{i,j}-\alpha^*_{i,j} (y_n-y_1)z^y_{i,1})]+(3\lambda^*_{i,j}(z_{i,j}-\alpha^*_{i,j} z_{i,1}))\cdot\\ \nonumber
&(1-\phi)^3 \phi+[3(z_{i,j}-\alpha^*_{i,j} z_{i,1})+3\lambda^*_{i,j}(z_{i,j+1}-\alpha^*_{i,j} z_{i,n})](1-\phi)^2 \phi^2+[(\lambda^*_{i,j})(z_{i+1,j}-\\ \nonumber
&\alpha^*_{i,j}z_{i,n})-(h^*_jz^y_{i,j+1}-\alpha^*_{i,j}(y_n-y_1)z^y_{i,n})](1-\phi) \phi^3+ (z_{i+1,j}-\alpha^*_{i,j} z_{i,n})\phi^4\\
Q_{i,j}^*(\phi)=~&\lambda^*_{i,j} (1-\phi)+\phi,
\phi=~\frac{{L_j^*}^{-1}(y)-y_1}{y_n-y_1}=\frac{y-y_j}{h_j^*},~y\in J_j.
\end{split}
\end{equation*}
\subsection{Formation of blending Rational Quartic Spline FIS}
In this section, we  blend these univariate FIFs given in (\ref{ICMCSURFACEeq2})-(\ref{ICMCSURFACEeq3}) using well-known bicubic partially  blended Coons patch  to obtain the desired surface. Consider the network of FIFs $\psi(x,y_j)$, $\psi(x,y_{j+1})$, $\psi^*(x_i,y)$, and $\psi^*(x_{i+1},y)$ for $i \in \mathbb{N}_{m-1},  j \in \mathbb{N}_{n-1}$. Consider the cubic
Hermite functions $b_{0,3}^i(x)=(1-\theta)^2(1+ 2 \theta)$, $b_{3,3}^i(x)=\theta^2 (3-2\theta)$, $b_{0,3}^j(y)=(1-\phi)^2(1+ 2 \phi)$, and $b_{3,3}^j(y)=\phi^2 (3-2\phi)$. These functions are called the blending functions, because their effect is to blend together four separate boundary curves to provide a single well-defined surface. On each individual patch $K_{i,j}= I_i \times J_j$, $i \in \mathbb{N}_{m-1}$, $j \in \mathbb{N}_{n-1}$, we define a blending rational quartic spline FIS

\begin{equation}\label{ICMCSURFACEeq4}
\begin{split}
\Phi (x,y)=&~-\begin{bmatrix}
-1 & b_{0,3}^i(x) & b_{3,3}^i(x)
\end{bmatrix}\begin{bmatrix}
0 & \psi(x,y_j) & \psi(x,y_{j+1})\\
\psi^*(x_i,y) &  z_{i,j} & z_{i,j+1}\\
\psi^*(x_{i+1},y) & z_{i+1,j} & z_{i+1,j+1}
\end{bmatrix}\begin{bmatrix}
-1\\  b_{0,3}^j(y) \\ b_{3,3}^j(y)
\end{bmatrix}
\end{split}
\end{equation}
The rational quartic spline FIS $\Phi$ can be written in the equivalent form to understand the geometry of Coons construction as follows:
\begin{equation}\label{ICMCSURFACEeq5}
\begin{split}
\Phi (x,y)=&~\begin{bmatrix}
 b_{0,3}^i(x) & b_{3,3}^i(x)
\end{bmatrix}\begin{bmatrix}
\psi^*(x_i,y) \\ \psi^*(x_{i+1},y)
\end{bmatrix}+\begin{bmatrix}
 b_{0,3}^j(y) & b_{3,3}^j(y)
\end{bmatrix}\begin{bmatrix}
\psi(x,y_j) \\ \psi(x,y_{j+1})
\end{bmatrix}\\
&~ - \begin{bmatrix}
 b_{0,3}^i(x) & b_{3,3}^i(x)
\end{bmatrix}
\begin{bmatrix}
z_{i,j} & z_{i,j+1}\\
z_{i+1,j} & z_{i+1,j+1}
\end{bmatrix}
\begin{bmatrix}
 b_{0,3}^j(y) \\ b_{3,3}^j(y)
\end{bmatrix},\\
:= &~\Phi_1 (x,y)+\Phi_2 (x,y)-\Phi_3 (x,y).
\end{split}
\end{equation}
\subsection{Bicubic partially blended rational FIS inside a rectangular parallelepiped}\label{r2pps52}
Let $\{x_i, y_j$, $z_{i,j}:i \in \mathbb{N}_m,j \in \mathbb{N}_n\}$ be an interpolation data set. For each $j \in \mathbb{N}_n$, the univariate FIF $\psi(x,y_j)$ lies inside a rectangle:
\noindent\textbf{Case-I}  We assume $0 \le \alpha_{i,j} <a_{i,j}$ for all ${i,j}\in
\mathbb{N}_{m-1}$. Then, with our assumption on $\alpha_{i,j}$,
we have $\alpha_{i,j} c \le \alpha_{i,j} z \le \alpha_{i,j} d $. This implies
$\alpha_{i,j} c+ \frac{P_{i,j}(x)}{Q_{i,j}(x)} \le \alpha_{i,j} z+ \frac{P_{i,j}(x)}{Q_{i,j}(x)} \le \alpha_{i,j} d + \frac{P_{i,j}(x)}{Q_{i,j}(x)}$. Consequently, for $G$ to lie within $R$, it suffices to
have the following conditions for all ${i,j}\in \mathbb{N}_{m-1}:$
\begin{equation}\label{ICAFWeq26a}
c \le \alpha_{i,j} c + \frac{P_{i,j}(x)}{Q_{i,j}(x)},
\end{equation}
\begin{equation}\label{ICAFWeq26b}
 \alpha_{i,j} d + \frac{P_{i,j}(x)}{Q_{i,j}(x)} \le d.
\end{equation}
Now, (\ref{ICAFWeq26a}) is satisfied if
\begin{equation}\label{ICAFWeq26c}
c(1-\alpha_{i,j}) \le \frac{A_{i,j} (1-\theta)^4 +B_{i,j}(1-\theta)^3 \theta+C_{i,j} (1-\theta)^2 \theta^2  +D_{i,j} (1-\theta) \theta^3+E_{i,j} \theta^4}{\lambda_{i,j} (1-\theta)+\theta},
\end{equation}
where the constants $A_{i,j}, B_{i,j}, C_{i,j}$, $D_{i,j}$, and $E_{i,j}$ are
given in (\ref{ICAFWeq4}). Note that the expression for
$Q_{i,j}(x)$ can be written in the degree elevated form as follows:
\begin{equation}\label{ICAFWeq26d}
\begin{split}
\lambda_{i,j} (1-\theta)+\theta \equiv~& \lambda_{i,j} (1-\theta)^3 + (2\lambda_{i,j}+1)(1-\theta)^2 \theta + (\lambda_{i,j}+2) (1-\theta) \theta^2+  \theta^3\\ \equiv~& \lambda_{i,j} (1-\theta)^4+(3\lambda_{i,j}+1)(1-\theta)^3 \theta+
(3\lambda_{i,j}+3)(1-\theta)^2 \theta^2+\\~&(\lambda_{i,j}+3) (1-\theta) \theta^3+\theta^4.
\end{split}
\end{equation}
Using (\ref{ICAFWeq26d}) in (\ref{ICAFWeq26c}), we have
\begin{equation}\label{ICAFWeq26e}
\begin{split}
&\big[A_{i,j} -c (1-\alpha_{i,j})\lambda_{i,j}\big](1-\theta)^4 + \big[B_{i,j}-c
(1-\alpha_{i,j})(3\lambda_{i,j}+1)\big]
(1-\theta)^3\theta+\big[ C_{i,j}-\\&c (1-\alpha_{i,j})(3\lambda_{i,j}+3)\big]
(1-\theta)^2 \theta^2+ \big[ D_{i,j}-c(1-\alpha_{i,j}) (\lambda_{i,j}+3) \big] (1-\theta)\theta^3+\\&\big[ E_{i,j}-c(1-\alpha_{i,j}) \big]\theta^4
\ge 0.
\end{split}
\end{equation}
With the substitution $\theta=\frac{\nu}{\nu+1}$,  (\ref{ICAFWeq26e}) is equivalent to\\
\begin{equation}\label{ICAFWeq26f}
\begin{split}
&\big[E_{i,j}-c(1-\alpha_{i,j}) \big]\nu^4+\big[ D_{i,j}-c(1-\alpha_{i,j}) (\lambda_{i,j}+3) \big]\nu^3 +\big[ C_{i,j}-c (1-\alpha_{i,j})(3\lambda_{i,j}+3)\big]\cdot\\& \nu^2 + \big[B_{i,j}-c
(1-\alpha_{i,j})(3\lambda_{i,j}+1)\big]\nu+ \big[A_{i,j} -c (1-\alpha_{i,j})\lambda_{i,j}\big]\ge0~~~ \forall ~\nu\ge0.
\end{split}
\end{equation}
It can be seen that the polynomial in (\ref{ICAFWeq26e}) is
positive if the following system of inequalities hold:
\begin{equation*}
\begin{split}
&E_{i,j}-c(1-\alpha_{i,j}) \ge 0, D_{i,j}-c(1-\alpha_{i,j}) (\lambda_{i,j}+3)\ge 0, C_{i,j}-c (1-\alpha_{i,j})(3\lambda_{i,j}+3) \ge 0,\\
&B_{i,j}-c
(1-\alpha_{i,j})(3\lambda_{i,j}+1)
\ge 0, A_{i,j} -c (1-\alpha_{i,j})\lambda_{i,j}\ge 0.
\end{split}
\end{equation*}
Since $\lambda_{i,j}>0$, the selection of $\alpha_{i,j}$ satisfying
 $\alpha_{i,j} \le
\dfrac{z_{i+1,j}-c}{z_{m,j}-c}$ and $\alpha_{i,j} \le \dfrac{z_{i,j}-c}{z_{1,j}-c}$ ensures first and fifth inequality. Let
$\alpha_{i,j} \le \min \big\{\dfrac{z_{i,j}-c}{z_{1,j}-c},
\dfrac{z_{i+1,j}-c}{z_{m,j}-c}\big\}$. By this choice of $\alpha_{i,j}$, $C_{i,j}-c (1-\alpha_{i,j})(3\lambda_{i,j}+3) \ge 0$ is obviously true because $C_{i,j}-c (1-\alpha_{i,j})(3\lambda_{i,j}+3)\equiv 3[(z_{i,j}-\alpha_{i,j}z_{1,j})-c (1-\alpha_{i,j})]+3\lambda_{i,j}[(z_{i+1,j}-\alpha_{i,j}z_{m,j})-c (1-\alpha_{i,j})]\ge 0$.
Assume,  $\alpha_{i,j} \le \dfrac{h_iz^x_{i,j}}{(x_m-x_1)z^x_{1,j}}$, then the condition $B_{i,j}-c(1-\alpha_{i,j})(3\lambda_{i,j}+1)\ge 0$ is met if $\lambda_{i,j} \ge \dfrac{-[(z_{i,j}-\alpha_{i,j}z_{1,j})-c (1-\alpha_{i,j})]}{3[(z_{i,j}-\alpha_{i,j}z_{1,j})-c (1-\alpha_{i,j})]+h_iz^x_{i,j}-\alpha_{i,j}(x_m-x_1)z^x_{1,j}}$. The condition $D_{i,j}-c(1-\alpha_{i,j}) (\lambda_{i,j}+3)\ge 0$ is met if $\lambda_{i,j} \ge -3+\dfrac{h_iz^x_{i+1,j}-\alpha_{i,j}(x_m-x_1)z^x_{m,j}}{(z_{i+1,j}-\alpha_{i,j}z_{m,j})-c (1-\alpha_{i,j})}.$ Hence, the following conditions are sufficient to verify (\ref{ICAFWeq26a}) if scaling factors and shape parameters chosen as
\begin{equation}\label{ICAFWeq26g}\left.
\begin{split}
0 ~&\le \alpha_{i,j}  < \min\Big\{\dfrac{z_{i,j}-c}{z_{1,j}-c},
\dfrac{z_{i+1,j}-c}{z_{m,j}-c},\dfrac{h_iz^x_{i,j}}{(x_m-x_1)z^x_{1,j}}\Big\},\\
\lambda_{i,j}~& \ge \lambda_{1(i,j)}:=~ \dfrac{-[(z_{i,j}-c)-\alpha_{i,j}(z_{1,j}-c) ]}{3[(z_{i,j}-c)-\alpha_{i,j}(z_{1,j}-c)]+h_iz^x_{i,j}-\alpha_{i,j}(x_m-x_1)z^x_{1,j}},\\
\lambda_{i,j}~& \ge \lambda_{2(i,j)}:=~ -3+\dfrac{h_iz^x_{i+1,j}-\alpha_{i,j}(x_m-x_1)z^x_{m,j}}{(z_{i+1,j}-c)-\alpha_{i,j}(z_{m,j}-c)}.
\end{split}\right\}
\end{equation}
Substituting the expression for $Q_{i,j}(x)$ in (\ref{ICAFWeq26d}), (\ref{ICAFWeq26b}) reduces to\\
$\big[d (1-\alpha_{i,j})\lambda_{i,j}-A_{i,j}\big](1-\theta)^4 + \big[d
(1-\alpha_{i,j})(3\lambda_{i,j}+1)-B_{i,j}\big]
(1-\theta)^3\theta+\big[d (1-\alpha_{i,j})(3\lambda_{i,j}+3)-C_{i,j}\big]
(1-\theta)^2\theta^2+ \big[d(1-\alpha_{i,j})(\lambda_{i,j}+3)-D_{i,j} \big](1-\theta) \theta^3+ \big[d(1-\alpha_{i,j})-E_{i,j} \big]\theta^4
\ge 0$.\\
Now, with the substitution $\theta=\frac{\nu}{\nu+1}$,  we may rewrite it as
$\big[d(1-\alpha_{i,j})-E_{i,j} \big]\nu^4+\big[d(1-\alpha_{i,j})(\lambda_{i,j}+3)-D_{i,j} \big]\nu^3 +\big[d (1-\alpha_{i,j})(3\lambda_{i,j}+3)-C_{i,j}\big] \nu^2 + \big[d
(1-\alpha_{i,j})(3\lambda_{i,j}+1)-B_{i,j}\big] \nu+ \big[d (1-\alpha_{i,j})\lambda_{i,j}-A_{i,j}\big]\ge0~\forall~\nu\ge0$. We proceed as above and the following conditions verify (\ref{ICAFWeq26b}):
\begin{equation}\label{ICAFWeq26h}\left.
\begin{split}
0 ~&\le \alpha_{i,j} < \min\Big\{\frac{d-z_{i,j}}{d-z_{1.j}},
\frac{d-z_{i+1,j}}{d-z_{m,j}},\frac{3(d-z_{i,j})-h_iz^x_{i,j}}{3(d-z_{1,j})-(x_m-x_1)z^x_{1,j}} \Big\},\\
\lambda_{i,j}~& \ge \lambda_{3(i,j)}:=~ \dfrac{-[(d-z_{i,j})-\alpha_{i,j}(d-z_{1,j})]}{3[(d-z_{i,j})-\alpha_{i,j}(d-z_{1.j})]-
[h_iz^x_{i,j}-\alpha_{i,j}(x_m-x_1)z^x_{1,j}]},\\
\lambda_{i,j}~& \ge \lambda_{4(i,j)}:=~ -3+\dfrac{\alpha_{i,j}(x_m-x_1)z^x_{m,j}-h_iz^x_{i+1,j}}{(d-z_{i+1,j})-\alpha_{i,j}(d-z_{m,j})}.
\end{split}\right\}
\end{equation}
Let us denote $\alpha_{i,j}^{max}=\min\Big\{a_{i,j},\dfrac{z_{i,j}-c}{z_{1,j}-c},
\dfrac{z_{i+1,j}-c}{z_{m,j}-c},\dfrac{h_iz^x_{i,j}}{(x_m-x_1)z^x_{1.j}},\dfrac{d-z_{i,j}}{d-z_{1,j}},
\dfrac{d-z_{i+1,j}}{d-z_{m,j}},\\~~~~~~~~~~~~~~~~~~~~~~~~~~~~
~~~~~~~~~~~~~~~~~~~~~\dfrac{3(d-z_{i,j})-h_iz^x_{i,j}}{3(d-z_{1,j})-(x_m-x_1)z^x_{1,j}}\Big\}$.
\\\\
\noindent\textbf{Case-II} We consider $-a_{i,j}< \alpha_{i,j} <0$. Let $(x,y) \in R$. In this case (\ref{ICAFWeq26a}) and (\ref{ICAFWeq26b}) will be replaced respectively by
\begin{equation}\label{ICAFWeq26i}
c \le \alpha_{i,j} d+ q_{i,j}(x) ~~\text{and}~~  \alpha_{i,j} c + q_{i,j}(x) \le
d.
\end{equation}
To make our considerations concise, we avoid the computational details that yield the following conditions for scaling factors and shape parameters
\begin{equation}\label{ICAFWeq26j}\left.
\begin{split}
\alpha_{i,j} ~&> \alpha_{i,j}^{min}=\max\big\{-a_{i,j}, \dfrac{z_{i,j}-c}{z_{1,j}-d},
\dfrac{z_{i+1,j}-c}{z_{m,j}-d},\dfrac{-h_iz^x_{i,j}-3
(z_{i,j}-c)}{3(d-z_{1,j})-(x_m-x_1)z^x_{1,j}},\dfrac{d-z_{i,j}}{c-z_{1,j}},\\&
~~~~~~~~~~~~~~~~~~~~~~~~~~~~~~~\dfrac{d-z_{i+1,j}}{c-z_{m,j}}, \dfrac{h_iz^x_{i,j}-3(d-z_{i,j})}{3(z_{1,j}+c)+(x_m-x_1)z^x_{1,j}}\big\}, \\
\lambda_{i,j}~& \ge \lambda_{5(i,j)}:=~ \dfrac{-[\alpha_{i,j}(d-z_{1,j})-(c-z_{i,j})]}{3[\alpha_{i,j}(d-z_{1,j})-(c-z_{i,j})]+
[h_iz^x_{i,j}-\alpha_{i,j}(x_m-x_1)z^x_{1,j}]},\\
\lambda_{i,j}~& \ge \lambda_{6(i,j)}:=~ -3+\dfrac{h_iz^x_{i+1,j}-\alpha_{i,j}(x_m-x_1)z^x_{m,j}}{\alpha_{i,j}(d-z_{m,j})-(c-z_{i+1,j})},\\
\lambda_{i,j}~& \ge \lambda_{7(i,j)}:=~ \dfrac{-[(d-z_{i,j})-\alpha_{i,j}(c-z_{1,j})]}{3[(d-z_{i,j})-\alpha_{i,j}(c-z_{1,j})]-
[h_id^x_{i,j}-\alpha_{i,j}(x_m-x_1)z^x_{1,j}]},\\
\lambda_{i,j}~& \ge \lambda_{8(i,j)}:=~ -3+\dfrac{\alpha_{i,j}(x_m-x_1)z^x_{m,j}-h_iz^x_{i+1,j}}{(d-z_{i+1,j})-\alpha_{i,j}(c-z_{m,j})},
\end{split}\right\}
\end{equation}
Similarly, we can find the condition for other fractal boundary curve and blend them to get the final result.
\subsection{Bicubic partially blended rational FIS above a prescribed plane}
\begin{theorem}
Let $\{x_i, y_j$, $z_{i,j}:i \in \mathbb{N}_m,j \in \mathbb{N}_n\}$ be an interpolation data set. For each $j \in \mathbb{N}_n$, the univariate FIF $\psi(x,y_j)$ lies above the line  $t=c[1-\frac{x}{a}-\frac{y_j}{b}]$ if the scaling factors and the shape parameters are selected so as to satisfy the following conditions
\begin{equation*}
\begin{split}
0\le \alpha_{i,j} < \min \big\{\dfrac{z_{i,j}-t_{i,j}}{z_{1,j}-t_{1,j}},
\dfrac{z_{i+1,j}-t_{i+1,j}}{z_{m,j}-t_{m,j}}\big\} ,~\lambda_{i,j} > \max \{0,\lambda_{9(i,j)}, \lambda_{10(i,j)}\},
\end{split}
\end{equation*}
where
$\lambda_{i,j} \ge \lambda_{9(i,j)}:=\dfrac{-[2\{(z_{i,j}-t_{i,j})-\alpha_{i,j}(z_{1,j}t_{1,j})\}+(z_{i,j}-t_{i+1,j})-\alpha_{i,j}(z_{1,j}-t_{m,j})]}
{[(z_{i+1,j}-t_{i,j})-\alpha_{i,j}(z_{m,j}-t_{1,j})+2\{(z_{i+1,j}-t_{i+1,j})-\alpha_{i,j}(z_{m,j}-t_{m,j})\}]}$.
$\lambda_{i,j} \ge \lambda_{10(i,j)}:= -\dfrac{z_{i,j}-t_{i,j}-\alpha_{i,j}(z_{1,j}-t_{1,j})}{l_{i,j}},\\ \lambda_{i,j} \ge
\lambda_{11(i,j)}:= -\dfrac{m_{i,j}}{[z_{i+1,j}-t_{i+1,j}-\alpha_{i,j}(z_{m,j}-t_{m,j})]}$,
$l_{i,j}=[2(z_{i,j}-t_{i,j})+(z_{i,j}-t_{i+1,j})+h_i z^x_{i,j}-\alpha_{i,j}\{2(z_{1,j}-t_{1,j})+(z_{1,j}-t_{m,j})+(x_m-x_1)z^x_{1,j}\}]$,
$m_{i,j}=[(z_{i+1,j}-t_{i,j})+2(z_{i+1,j}-t_{i+1,j})-h_i z^x_{i+1,j}-\alpha_{i,j}\{(z_{m,j}-t_{1,j})+2(z_{m,j}-t_{m,j})
-(x_m-x_1)z^x_{m,j}\}]$.
\end{theorem}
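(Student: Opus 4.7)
The plan is to reduce the bivariate statement to the univariate ``above-the-line'' theorem proved just above, by regarding each cross-section $y=y_j$ of the fractal boundary curve $\psi(\cdot,y_j)$ as a univariate rational quartic spline FIF of the type analyzed in Section~\ref{ICAFWsec2}. Fix $j\in\mathbb{N}_n$. Since $y_j$ is a constant in this slice, the given plane $t=c\bigl[1-\tfrac{x}{a}-\tfrac{y_j}{b}\bigr]$ restricts to the affine line $\tau(x):=m_j\,x+k_j$ with $m_j=-c/a$ and $k_j=c\bigl(1-y_j/b\bigr)$. The hypothesis $z_{i,j}>t_{i,j}$ at the nodes is then precisely the univariate hypothesis $y_i>t_i$ of the preceding theorem, via the identifications $y_i\leftrightarrow z_{i,j}$, $d_i\leftrightarrow z^x_{i,j}$, $t_i\leftrightarrow t_{i,j}$.

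First, I would exploit the self-referential functional equation for $\psi(\cdot,y_j)$,
\begin{equation*}
\psi\bigl(L_i(x),y_j\bigr)=\alpha_{i,j}\,\psi(x,y_j)+\frac{P_{i,j}(\theta)}{Q_{i,j}(\theta)}.
\end{equation*}
Since $\psi(x_i,y_j)=z_{i,j}>\tau(x_i)$ for every $i$, it suffices to verify that $\psi(x,y_j)>\tau(x)$ on $I$ implies $\psi\bigl(L_i(x),y_j\bigr)>\tau\bigl(L_i(x)\bigr)$ for each $i\in\mathbb{N}_{m-1}$; iterating the RB operator up to its unique fixed point then concludes. Assuming $\psi(x,y_j)>\tau(x)$ and imposing $0\le\alpha_{i,j}<a_{i,j}$ so that $\alpha_{i,j}\psi(x,y_j)\ge\alpha_{i,j}\tau(x)$, the target inequality can, after multiplying through by the positive denominator $Q_{i,j}(\theta)$, be recast as
\begin{equation*}
\alpha_{i,j}\,\tau(x)\,Q_{i,j}(\theta)+P_{i,j}(\theta)-\tau\bigl(L_i(x)\bigr)\,Q_{i,j}(\theta)>0,\qquad \theta\in[0,1].
\end{equation*}

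Next, I would put the left-hand side into the basis $\{(1-\theta)^k\theta^{4-k}\}_{k=0}^{4}$. Using the degree-elevation identity~(\ref{ICAFWeq26d}) for $Q_{i,j}$ together with the explicit form of $P_{i,j}$, the polynomial takes the shape
\begin{equation*}
A^{\sharp}_{i,j}(1-\theta)^4+B^{\sharp}_{i,j}(1-\theta)^3\theta+C^{\sharp}_{i,j}(1-\theta)^2\theta^2+D^{\sharp}_{i,j}(1-\theta)\theta^3+E^{\sharp}_{i,j}\theta^4,
\end{equation*}
whose coefficients coincide with the expressions $A_n^*,\ldots,E_n^*$ of~(\ref{ICAFWeq26n}) after the substitutions $y_n\mapsto z_{i,j}$, $d_n\mapsto z^x_{i,j}$, $t_n\mapsto t_{i,j}$, $h_n\mapsto h_i$ and $(x_N-x_1)\mapsto(x_m-x_1)$. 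Applying the standard change of variable $\theta=\nu/(\nu+1)$, the polynomial is positive on $[0,1]$ provided all five coefficients are positive. The corner coefficients $A^{\sharp}_{i,j}>0$ and $E^{\sharp}_{i,j}>0$ produce the explicit bounds on $\alpha_{i,j}$ stated in the theorem, the middle coefficient $C^{\sharp}_{i,j}>0$ yields $\lambda_{i,j}\ge\lambda_{9(i,j)}$, while $B^{\sharp}_{i,j}>0$ and $D^{\sharp}_{i,j}>0$ deliver $\lambda_{i,j}\ge\lambda_{10(i,j)}$ and $\lambda_{i,j}\ge\lambda_{11(i,j)}$ respectively.

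The only real obstacle is careful bookkeeping: one must map every quantity of the univariate proof consistently to the surface-slice notation and then verify that the three nontrivial Bernstein-coefficient conditions reduce exactly to the three thresholds $\lambda_{9(i,j)}$, $\lambda_{10(i,j)}$, $\lambda_{11(i,j)}$ listed in the statement. No new analytic idea beyond the univariate ``above-the-line'' theorem is required; the argument inherits positivity of $Q_{i,j}$ from $\lambda_{i,j}>0$ and positivity from the Bernstein basis, exactly as in the one-dimensional case.
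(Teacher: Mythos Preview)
Your proposal is correct and follows essentially the same route as the paper: fix $j$, use the self-referential equation for $\psi(\cdot,y_j)$ together with $\alpha_{i,j}\ge 0$ and $Q_{i,j}>0$ to reduce the inequality to positivity of a quartic in the Bernstein basis via the degree-elevation identity~(\ref{ICAFWeq26d}), then use $\theta=\nu/(\nu+1)$ and read off the conditions on $\alpha_{i,j}$ from $A^{\sharp}_{i,j},E^{\sharp}_{i,j}$ and on $\lambda_{i,j}$ from $B^{\sharp}_{i,j},C^{\sharp}_{i,j},D^{\sharp}_{i,j}$. The paper's proof is exactly this computation carried out in the slice notation, with the additional tacit assumption that the denominator in $\lambda_{9(i,j)}$ is positive (i.e.\ $\alpha_{i,j}$ is also bounded by the quotient arising from the $\lambda_{i,j}$-coefficient of $C^{\sharp}_{i,j}$), which you correctly flag as ``careful bookkeeping.''
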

\begin{proof}
Let $\{x_i, y_j,z_{i,j}: i \in \mathbb{N}_m,j \in \mathbb{N}_n \}$ be an interpolation data set lying above the plane  $t=c[1-\frac{x}{a}-\frac{y}{b}]$,
i,e. $z_{i,j}>t_{i,j}=c[1-\frac{x}{a}-\frac{y}{b}]$ for all $i \in \mathbb{N}_m,j \in \mathbb{N}_n$. We wish to find conditions on the parameters of rational quartic FIS so that it lies above the aforementioned plane, that is $\Phi(x,y)>t(x,y)$ for all $(x,y) \in K$. We recall that the surface generated by the rational quartic FIS $\psi$ lies above the plane if the network of boundary curves $\psi(x,y_j)$ $\forall$ $j \in \mathbb{N}_n$ and $\psi^*(x_i,y)$ $\forall$ $i \in \mathbb{N}_m$ lie above the plane. Since, $\Phi(x_i,y_j)= z_{i,j}>t(x_i,y_j)$ for all $i \in \mathbb{N}_m,j \in \mathbb{N}_n$, the proof of $\psi(\tau,y_j)> t(\tau,y_j)$ for all $\tau \in I$ is equivalent to find the conditions for which
$\psi(x,y_j)> t(x,y_j)$, $x \in I$ implies $\psi(L_i(x),y_j)> t(L_i(x),y_j)$ for $x \in I$. Assume $\psi(x,y_j)> t(x,y_j)$. We need to prove that
\begin{equation}\label{ICMCSURFACEeq6}
\alpha_{i,j} \psi\big(x,y_j\big)+ \frac{P_{i,j}(\theta)}{Q_{i,j}(\theta)}>c[1-\frac{a_i x+ b_i}{a}-\frac{y_j}{b}],
\end{equation}
Since $Q_{i,j}(\theta)>0$, in view of assumptions $\psi(x,y_j)> t(x,y_j)$ and $\alpha_{i,j} \ge 0$ for all $i \in \mathbb{N}_m,j \in \mathbb{N}_n$, we deduce that the following condition confirm (\ref{ICMCSURFACEeq6}).
\begin{equation}\label{ICMCSURFACEeq7}
\alpha_{i,j}c[1-\frac{x}{a}-\frac{y_j}{b}]Q_{i,j}(\theta)+ P_{i,j}(\theta) -\{c[(1-\frac{y_j}{b})-\frac{a_i x+ b_i}{a}]\}Q_{i,j}(\theta)>0.
\end{equation}
Substituting $x=x_1+\theta(x_m-x_1)$, the expression for $P_{i,j}(\theta)$ and using the degree elevated form of $Q_{i,j}(\theta)$ from (\ref{ICAFWeq26d}), (\ref{ICAFWeq26l}) reduces to
\begin{equation}\label{ICAFWeq26m}
A_{i,j}^*(1-\theta)^4 +B_{i,j}^*(1-\theta)^3 \theta
+ C_{i,j}^*(1-\theta)^2\theta^2 + D_{i,j}^*(1-\theta) \theta^3+ E_{i,j}^*\theta^4 >0, \theta \in [0,1],
\end{equation}
where\\
\begin{equation}\label{ICAFWeq26n}
\begin{split}
A_{i,j}^*=~& \lambda_{i,j}[z_{i,j}-t_{i,j}-\alpha_{i,j}(z_{1,j}-t_{1,j})],\\
B_{i,j}^*=~& \lambda_{i,j}[2(z_{i,j}-t_{i,j})+(z_{i,j}-t_{i+1,j})+h_i z^x_{i,j}-\alpha_{i,j}\{2(z_{1,j}-t_{1,j})+(z_{1,j}-t_{m,j})\\&+(x_m-x_1)z^x_{1,j}\}]
+[z_{i,j}-t_{i,j}-\alpha_{i,j}(z_{1,j}-t_{1,j})],\\
C_{i,j}^*=~& \lambda_{i,j}[(z_{i+1,j}-t_{i,j})-\alpha_{i,j}(z_{m,j}-t_{1,j})+2\{(z_{i+1,j}-t_{i+1,j})-\alpha_{i,j}(z_{m,j}-t_{m,j})\}]
+\\~&[2\{(z_{i,j}-t_{i,j})-\alpha_{i,j}(z_{1,j}-t_{1,j})\}+(z_{i,j}-t_{i+1,j})-\alpha_{i,j}(z_{1,j}-t_{m,j})],\\
D_{i,j}^*=~&\lambda_{i,j}[z_{i+1,j}-t_{i+1,j}-\alpha_{i,j}(z_{m,j}-t_{m,j})]+
[(z_{i+1,j}-t_{i,j})+2(z_{i+1,j}-t_{i+1,j})-h_i z^x_{i+1,j}\\&-\alpha_{i,j}\{(z_{m,j}-t_{1,j})+2(z_{m,j}-t_{m,j})
-(x_m-x_1)z^x_{m,j}\}],\\
E_{i,j}^*=~& [z_{i+1,j}-t_{i+1,j}-\alpha_{i,j}(z_{m,j}-t_{m,j})].
\end{split}
\end{equation}
With the substitution $\theta=\frac{\nu}{\nu+1}$,  (\ref{ICAFWeq26m}) is equivalent to $E_{i,j}^*\nu^4+D_{i,j}^*\nu^3 +C_{i,j}^* \nu^2 +B_{i,j}^* \nu + A_{i,j}^*>0$ for all $\nu>0$. The polynomial in (\ref{ICAFWeq26m}) is positive if $A_{i,j}^*> 0, B_{i,j}^*> 0, C_{i,j}^*> 0$, $D_{i,j}^*> 0$ and $E_{i,j}^*> 0$ are satisfied. It is straight forward to see that $A_{i,j}^*> 0$ is satisfied if $\alpha_{i,j}<\frac{z_{i,j}-t_{i,j}}{z_{1,j}-t_{1,j}}$ and $E_{i,j}^*> 0$ if $\alpha_{i,j}<\frac{z_{i+1,j}-t_{i+1,j}}{z_{m,j}-t_{m,j}}$. Assume,  $\alpha_{i,j} \le \dfrac{(z_{i+1,j}-t_{i,j})+2(z_{i+1,j}-t_{i+1,j})}{(z_{m,j}-t_{1,j})+2(z_{m,j}-t_{m,j})}$, then the condition  $C_{i,j}^* >0$ is met if\\ $\lambda_{i,j} > \dfrac{-[2\{(z_{i,j}-t_{i,j})-\alpha_{i,j}(z_{1,j}-t_{1,j})\}+(z_{i,j}-t_{i+1,j})-\alpha_{i,j}(z_{1,j}-t_{m,j})]}
{[(z_{i+1,j}-t_{i,j})-\alpha_{i,j}(z_{m,j}-t_{1,j})+2\{(z_{i+1,j}-t_{i+1,j})-\alpha_{i,j}(z_{m,j}-t_{m,j})\}]}$.
It is plain to see that the additional conditions on the shape parameters $\lambda_{i,j}>0$ prescribed in the theorem ensure the positivity of  $B_{i,j}^*$ and $D_{i,j}^*$. This completes the proof.
\end{proof}
Similarly, we can find the condition for other fractal boundary curve.
\begin{theorem}\label{main}
Let $\{x_i, y_j$, $z_{i,j}:i \in \mathbb{N}_m,j \in \mathbb{N}_n \}$ be an interpolation data set that lies above the plane $t=c[1-\frac{x}{a}-\frac{y}{b}]$ i.e. $z_{i,j}>t_{i,j}$  for all $i \in \mathbb{N}_m,j \in \mathbb{N}_n$. Then the
rational quartic spline FIS $\Phi$ (cf. (\ref{ICMCSURFACEeq4})) lies above the plane provided the horizontal scaling parameters $\alpha_{i,j}$ for $i \in \mathbb{N}_{m-1},j \in \mathbb{N}_n$ and the vertical scaling parameters $\alpha^*_{i,j}$ for $i \in \mathbb{N}_m,j \in \mathbb{N}_{n-1}$, the horizontal shape parameters $\lambda_{i,j}$,for $i \in \mathbb{N}_{m-1},j \in \mathbb{N}_n$ and the vertical shape parameters $\lambda^*_{i,j}$ for
$i \in \mathbb{N}_m,j \in \mathbb{N}_{n-1}$ satisfy the hypotheses of the above Theorem.
\end{theorem}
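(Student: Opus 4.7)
The plan is to reduce the bivariate claim to the univariate theorem already proven for a fractal curve above a line, and then transport the boundary-curve information to the full surface by exploiting the affine-reproducing property of the bicubic partially blended Coons construction.

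\textbf{Step 1 (boundary-curve analysis).} For each fixed $j \in \mathbb{N}_n$, the univariate slice data $\{(x_i, z_{i,j}) : i \in \mathbb{N}_m\}$ lies above the line $x \mapsto t(x, y_j) = c(1 - x/a - y_j/b)$, since by hypothesis $z_{i,j} > t(x_i, y_j)$. The stipulated conditions on $\alpha_{i,j}, \lambda_{i,j}$ are precisely the ones required by the previously-proven univariate theorem applied to this slice, so $\psi(x, y_j) > t(x, y_j)$ for all $x \in I$ and all $j$. The same argument in the orthogonal direction (using $\alpha^*_{i,j}, \lambda^*_{i,j}$) gives $\psi^*(x_i, y) > t(x_i, y)$ for all $y \in J$ and all $i \in \mathbb{N}_m$.

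\textbf{Step 2 (affine-reproduction of the Coons patch).} I would next verify by direct substitution into (\ref{ICMCSURFACEeq5}) that plugging $t$ in for every occurrence of $\psi$, $\psi^*$, and $z$ recovers $t(x,y)$ itself. The only ingredients required are the partition-of-unity identities $b_{0,3}^i + b_{3,3}^i \equiv 1$ and $b_{0,3}^j + b_{3,3}^j \equiv 1$ together with the affine form of $t$; the doubly-blended corner correction exactly cancels the errors produced by $L[t]$ and $M[t]$ separately.

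\textbf{Step 3 (comparison and decomposition).} By linearity of the Coons formula in its boundary and corner data, subtracting the two versions of (\ref{ICMCSURFACEeq5}) yields
\[
\Phi(x,y) - t(x,y) = b_{0,3}^i(x) U_i(y) + b_{3,3}^i(x) U_{i+1}(y) + b_{0,3}^j(y) V_j(x) + b_{3,3}^j(y) V_{j+1}(x) - \sum_{k\in\{i,i+1\}}\sum_{\ell\in\{j,j+1\}} b^{k}(x)\, b^{\ell}(y)\, W_{k,\ell},
\]
where $U_i(y) := \psi^*(x_i, y) - t(x_i, y)$, $V_j(x) := \psi(x, y_j) - t(x, y_j)$, $W_{k,\ell} := z_{k,\ell} - t(x_k,y_\ell)$, and $b^k, b^\ell$ are the appropriate Hermite blending factors. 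By Step~1 and the hypothesis, $U_i, U_{i+1}, V_j, V_{j+1}, W_{k,\ell}$ are all strictly positive. Using the interpolation identities $\psi^*(x_i,y_j) = z_{i,j}$, the negative sum can be regrouped as $b_{0,3}^i(x)\,\mathcal{H}_y[U_i](y) + b_{3,3}^i(x)\,\mathcal{H}_y[U_{i+1}](y)$, where $\mathcal{H}_y[U_i](y) := b_{0,3}^j(y) U_i(y_j) + b_{3,3}^j(y) U_i(y_{j+1})$ is the zero-derivative cubic Hermite interpolant of $U_i$ in $y$. Hence
\[
\Phi - t = b_{0,3}^i(x)\bigl(U_i - \mathcal{H}_y U_i\bigr) + b_{3,3}^i(x)\bigl(U_{i+1} - \mathcal{H}_y U_{i+1}\bigr) + b_{0,3}^j(y)\, V_j + b_{3,3}^j(y)\, V_{j+1}.
\]

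\textbf{Main obstacle.} The last two summands are a convex combination of strictly positive quantities, hence strictly positive; but the first two contain Hermite-deviation terms $U_i - \mathcal{H}_y U_i$ that are not sign-definite a priori. This is the technical crux. To close the argument I would pair the above decomposition with its dual (obtained by factoring $b_{0,3}^j, b_{3,3}^j$ instead of $b_{0,3}^i, b_{3,3}^i$), invoke the continuity of $\Phi - t$ together with its strict positivity on $\partial K_{i,j}$ (where $\Phi$ reduces to the boundary curves treated in Step~1), and use the non-negativity and unit sum of the Hermite blending factors to argue that the strict positivity contributed by $V_j, V_{j+1}, U_i, U_{i+1}$ dominates any possible negativity in the deviation terms throughout each sub-rectangle $K_{i,j}$; repeating the argument over all $i, j$ delivers $\Phi > t$ on $K$.
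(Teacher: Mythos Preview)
Your Steps~1 and~2 are sound and actually go further than the paper: the paper gives no separate proof for this theorem at all, and inside the proof of the preceding theorem simply asserts (``We recall that the surface generated by the rational quartic FIS lies above the plane if the network of boundary curves \ldots\ lie above the plane'') the very reduction you are trying to justify. Your affine-reproduction identity in Step~2 is correct and is the natural first move toward making that reduction rigorous.

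The genuine gap is your resolution of the ``main obstacle'' in Step~3. The domination claim at the end---that the strict positivity of $U_i,U_{i+1},V_j,V_{j+1}$ controls the sign-indefinite deviations $U_k-\mathcal{H}_yU_k$---fails without further hypotheses. On a single patch with $t\equiv 0$, take all four corner values equal to a large $M>0$ and let each boundary curve dip to a small $\epsilon>0$ at its midpoint (still strictly positive, so Step~1 is satisfied). At the patch center the cubic Hermite weights $b_{0,3},b_{3,3}$ are both $\tfrac12$, giving $\Phi_1=\Phi_2=\epsilon$ while $\Phi_3=M$, hence $\Phi=2\epsilon-M<0$. Nothing in the stated constraints on $\alpha_{i,j},\lambda_{i,j}$ (which force the boundary curves above the plane but do not bound their oscillation relative to the corner values) rules out this configuration for rational quartic FIF boundaries. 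So the obstacle you correctly flagged is real, and neither your hand-wave about positivity ``dominating'' nor the paper's bare assertion closes it; an honest proof would need an additional hypothesis or a different mechanism linking the boundary curves to the corner data.
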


\section{Convergence Analysis for $\alpha$-Fractal Rational Quartic Spline}
\begin{theorem}\label{ICAFWthm1}
Let $\Phi$ be the bicubic partially blended rational FIS (cf. (\ref{ICMCSURFACEeq4})) corresponding to
the bivariate data $\{x_i, y_j$, $z_{i,j}:i \in \mathbb{N}_m,j \in \mathbb{N}_n \}$ generated from an original function $H\in C^1(R)$. Then, $\Phi$ converges uniformly to $H$ as $h^\dag \to 0$, where $h^\dag:= \max\{h, h^*\}$.
\end{theorem}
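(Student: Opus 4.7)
The plan is to use the additive decomposition $\Phi=\Phi_1+\Phi_2-\Phi_3$ recorded in \eqref{ICMCSURFACEeq5} and to compare $\Phi$, patch by patch on each $K_{i,j}$, with an intermediate surface $\widetilde\Phi$ obtained by replacing every univariate FIF in \eqref{ICMCSURFACEeq4} with the corresponding exact slice of $H$; that is, $\psi(x,y_j)\mapsto H(x,y_j)$ and $\psi^*(x_i,y)\mapsto H(x_i,y)$, while the corner values $z_{i,j}=H(x_i,y_j)$ are unchanged. Then the triangle inequality gives
\begin{equation*}
\|\Phi-H\|_\infty \;\le\; \|\Phi-\widetilde\Phi\|_\infty+\|\widetilde\Phi-H\|_\infty,
\end{equation*}
and I would drive both terms to zero as $h^\dag\to 0$ by separate arguments.

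For $\|\Phi-\widetilde\Phi\|_\infty$ the point is that, when one forms the difference, the corner contributions cancel (both $\Phi$ and $\widetilde\Phi$ interpolate $z_{i,j}$ at the vertices), so $\Phi-\widetilde\Phi$ is a linear combination of the four univariate errors $\psi(\cdot,y_j)-H(\cdot,y_j)$, $\psi(\cdot,y_{j+1})-H(\cdot,y_{j+1})$, $\psi^*(x_i,\cdot)-H(x_i,\cdot)$, $\psi^*(x_{i+1},\cdot)-H(x_{i+1},\cdot)$ with coefficients $b_{0,3}^i(x),b_{3,3}^i(x),b_{0,3}^j(y),b_{3,3}^j(y)$. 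Since each Hermite blending function is bounded by $1$ on $[0,1]$, one obtains
\begin{equation*}
\|\Phi-\widetilde\Phi\|_\infty \;\le\; 2\max_{j}\|\psi(\cdot,y_j)-H(\cdot,y_j)\|_\infty + 2\max_{i}\|\psi^*(x_i,\cdot)-H(x_i,\cdot)\|_\infty,
\end{equation*}
and each slice-wise term is controlled by the univariate convergence estimate stated in Section~\ref{ICAFWsec3}, applied to the $C^1$ cross-section of $H$ along the respective grid line; this tends to zero as $h,h^*\to 0$ provided the scaling vectors $\alpha_{i,j},\alpha^*_{i,j}$ are chosen so that $|\alpha|_\infty/(1-|\alpha|_\infty)\to 0$ in the limit.

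For $\|\widetilde\Phi-H\|_\infty$ the argument is the classical one for bicubic Hermite Coons blending. On each subpatch $K_{i,j}$ the surface $\widetilde\Phi$ interpolates $H$ along all four edges (evaluating \eqref{ICMCSURFACEeq4} at $y=y_j$ reduces it to $H(x,y_j)$, and similarly for the other three edges), so $(\widetilde\Phi-H)|_{\partial K_{i,j}}=0$. Using the partition-of-unity identities $b_{0,3}^i(x)+b_{3,3}^i(x)\equiv 1$ and $b_{0,3}^j(y)+b_{3,3}^j(y)\equiv 1$, one can rewrite $\widetilde\Phi-H$ as a weighted combination of differences of values of $H$ at points of $K_{i,j}$ separated by at most $\max\{h_i,h^*_j\}$; uniform continuity of $H$ on the compact rectangle $K$ then gives $\|\widetilde\Phi-H\|_{L^\infty(K_{i,j})}\to 0$ as the mesh shrinks, and passing to the maximum over $(i,j)$ yields $\|\widetilde\Phi-H\|_\infty\to 0$ as $h^\dag\to 0$.

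The main obstacle is reconciling the smoothness assumption $H\in C^1(R)$ with the $C^4$-based quantitative bound recorded in Theorem~3.1 of Section~\ref{ICAFWsec3}: one either invokes a qualitative version of that convergence (which follows directly from the Read--Bajraktarevi\'c fixed-point representation \eqref{ICAFWeq4aa} once the scaling factors are $o(1)$ in $h$), or upgrades the hypothesis to $H\in C^4(R)$ in order to use the stated estimate verbatim. A secondary technical point is to enforce $|\alpha_{i,j}|\le a_{i,j}$ and $|\alpha^*_{i,j}|\le c_j$, so that $|\alpha|_\infty$ is itself $O(h^\dag)$; beyond that, the two error estimates are routine and combining them completes the proof.
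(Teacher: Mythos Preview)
Your argument is correct, but the intermediate surface you interpose differs from the paper's. The paper splits via the \emph{classical} bicubic partially blended rational surface $C$ (i.e., the surface obtained by setting all scaling factors $\alpha_{i,j},\alpha^*_{i,j}$ to zero), not via the Coons patch $\widetilde\Phi$ built on the exact slices $H(\cdot,y_j)$, $H(x_i,\cdot)$. Thus the paper bounds $\|\Phi-C\|_\infty$ using only the univariate perturbation estimate $\|\psi(\cdot,y_j,\alpha)-\psi(\cdot,y_j,0)\|_\infty$ (Proposition~4.2 of \cite{KCFractal2}), which involves no comparison with $H$ and hence requires no smoothness of $H$; the term $\|C-H\|_\infty$ is then purely classical. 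Your decomposition instead makes the second term $\|\widetilde\Phi-H\|_\infty$ very clean (transfinite interpolation with exact edge data, controlled by uniform continuity), but the price is that your first term requires the full univariate convergence $\psi(\cdot,y_j)\to H(\cdot,y_j)$, and the only quantitative statement available in the paper for that (Theorem~\ref{ICAFWthm1} of Section~\ref{ICAFWsec3}) asks for $C^4$ slices---exactly the obstacle you identified. In short, the paper's split decouples the fractal perturbation from the approximation of $H$ and is therefore better matched to the stated $C^1$ hypothesis; your split is geometrically more transparent but needs the qualitative $C^1$ univariate convergence you propose to supply separately.
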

\begin{proof}
let  $\Phi$ and $C$, respectively, be the bicubic partially blended rational FIS and its traditional counterpart.
We will get our desired result by using the triangle inequality:
$\|\Phi-H\|_{\infty} \le \|\Phi - C\|_{\infty} + \|C - H\|_{\infty}$.
\begin{equation}\label{convergence1}
\begin{split}
|\Phi(x,y)-C(x,y)| \le ~& b_{0,3}^i(x)|\psi^*(x_i,y,\alpha_{i,j}^*)-\psi^*(x_i,y,0)|+b_{3,3}^i(x)|\psi^*(x_{i+1},y,\alpha_{i+1,j}^*)-\psi^*(x_i,y,0)|,\\
+ ~& b_{0,3}^j(y)|\psi(x,y_j,\alpha_{i,j})-\psi(x,y_j,0)|+b_{3,3}^j(y)|\psi(x,y_{j+1},\alpha_{i,j+1})-\psi(x,y_{j+1},0)|.
\end{split}
\end{equation}
We use here the proposition 4.2 appeared in \cite{KCFractal2} to get the following,\\ 
$|\psi^*(x_i,y,\alpha_{i,j}^*)-\psi^*(x_i,y,0)|\le \frac{h^*}{|J|-h^*}(\max\{z_{i,j}:j \in \mathbb{N}_n\}+\frac{1}{4}h^*\max\{z^y_{i,j}:j \in \mathbb{N}_n\}+\max\{|z_{i,1}|,|z_{i,n}|\}+\frac{1}{4}|J|\max\{|z^y_{i,1}|,|z^y_{i,n}|\})$,\\

$|\psi(x,y_j,\alpha_{i,j})-\psi(x,y_j,0)|\le \frac{h}{|I|-h}(\max\{z_{i,j}:i \in \mathbb{N}_m\}+\frac{1}{4}h\max\{z^x_{i,j}:i \in \mathbb{N}_m\}+\max\{|z_{1,j}|,|z_{m,j}|\}+\frac{1}{4}|I|\max\{|z^x_{1,j}|,|z^x_{n,j}|\})$,
We know that 
$b_{0,3}^i(x)+b_{3,3}^i(x)=b_{0,3}^j(y)+b_{3,3}^j(y)=1$, we get from (\ref{convergence1}):

\begin{equation*}\label{convergence1a}
\begin{split}
|\Phi(x,y)-C(x,y)| \le ~& \frac{h^*}{|J|-h^*}\big\{|Z|_{\infty}+\frac{1}{4}h^*|Z^y|_{\infty}+|Z^*_e|_{\infty}+\frac{1}{4}|J||D^*_e|_{\infty}\big\}
+\frac{h}{|I|-h}\big\{|Z|_{\infty}+\frac{1}{4}h|Z^x|_{\infty}\\&+|Z_e|_{\infty}+\frac{1}{4}|I||D_e|_{\infty}\big\},
\end{split}
\end{equation*}
It is true for all values,
\begin{equation}\label{convergence1}
\begin{split}
||\Phi(x,y)-C(x,y)||_{\infty} \le ~& \frac{h^*}{|J|-h^*}\big\{|Z|_{\infty}+\frac{1}{4}h^*|Z^y|_{\infty}+|Z^*_e|_{\infty}+\frac{1}{4}|J||D^*_e|_{\infty}\big\}
+\frac{h}{|I|-h}\big\{|Z|_{\infty}+\frac{1}{4}h|Z^x|_{\infty}\\&+|Z_e|_{\infty}+\frac{1}{4}|I||D_e|_{\infty}\big\}.
\end{split}
\end{equation}
The above triangle inequality asserts that
$\|\Phi-H\|_{\infty}\to 0$ as $h$ and $h^*$ tend to zero.
\end{proof}

\end{document}